\newenvironment{proof:mainthm1}{\vspace{0.2cm}\paragraph{\bf\textit{Proof  of Theorem \ref{thm:main1}:}}}{\hfill$\blacksquare$ }
\newenvironment{proof:mainthm2}{\vspace{0.2cm}\paragraph{\bf\textit{Proof  of Theorem \ref{thm:main2}:}}}{\hfill$\blacksquare$ }
\newenvironment{proof:corollary}{\vspace{0.2cm}\paragraph{\bf\textit{Proof  of Corollary \ref{cor:corollary}:}}}{\hfill$\blacksquare$ }
\newenvironment{proof:corollary2}{\vspace{0.2cm}\paragraph{\bf\textit{Proof  of Corollary \ref{cor:examples}:}}}{\hfill$\blacksquare$ }
\setlist[itemize]{noitemsep,nolistsep}
\setlist[itemize]{noitemsep,nolistsep}
\setlist[enumerate]{noitemsep,nolistsep}
\def\Z{{\bf Z}}
\def\C{{\bf C}}
\def\R{{\bf R}}
\def\Q{{\bf Q}}
\def\P{{\bf P}}
\def\hk{hyper-K\"ahler}
\def\phi{\varphi}
\def\cF{\mathcal{F}}
\def\cG{\mathcal{G}}
\def\cL{\mathcal{L}}
\def\cP{\mathcal{P}}
\def\cS{\mathcal{S}}
\def\cX{\mathscr{X}}
\def\lra{\longrightarrow}
\def\llra{\hbox to 10mm{\rightarrowfill}}
\def\lllra{\hbox to 15mm{\rightarrowfill}}
\def\llla{\hbox to 10mm{\leftarrowfill}}
\def\lllla{\hbox to 15mm{\leftarrowfill}}
\DeclareMathOperator{\Aut}{Aut}
\DeclareMathOperator{\Bir}{Bir}
\DeclareMathOperator{\Hdg}{Hdg}
\DeclareMathOperator{\id}{id}
\DeclareMathOperator{\Int}{int}
\DeclareMathOperator{\Mon}{Mon}
\DeclareMathOperator{\Mov}{Mov}
\DeclareMathOperator{\rank}{rank}
\def\llra{\hbox to 10mm{\rightarrowfill}}
\def\lllra{\hbox to 15mm{\rightarrowfill}}
\def\subset{\subseteq}
\newtheorem{lem}{Lemma}[section]
\newtheorem{thm}[lem]{Theorem}
\newtheorem{cor}[lem]{Corollary}
\newtheorem{prop}[lem]{Proposition}
\theoremstyle{definition}
\newtheorem{defin}[lem]{Definition}
\newtheorem{rem}[lem]{Remark}
\newtheorem{ex}[lem]{Example}
\theoremstyle{remark}
\newtheorem*{remark*}{Remark}
\newtheorem*{note*}{Note}
\def\Lkkk[#1]{{\Lambda_{\KKK^{[#1]}}}}
\def\kkk[#1]{{\KKK^{[#1]}}}
\DeclareMathOperator{\KKK}{{K3}}
\def\sss[#1]{{S^{[#1]}}}
\def\setminus{\smallsetminus}
\definecolor{orange}{rgb}{1,0.55,0}
\definecolor{pink}{rgb}{1,0.65,0.79}
\begin{document}
\title{Birational automorphism groups in families of hyper-Kähler manifolds}

\author[]{}
\address{}
 \email{}

    \author[F.\ A.\ Denisi]{Francesco Antonio Denisi}
    \address{Francesco Antonio Denisi, Fachrichtung Mathematik, Campus, Gebäude E2 4, Universität des Saarlandes, 66123 Saarbrücken, Deutschland}
    \email{denisi@math.uni-sb.de}

    \author[C.\ Onorati]{Claudio Onorati}
    \address{Claudio Onorati, Dipartimento di Matematica, Universit\`a degli Studi di Bologna, Piazza di Porta San Donato 5, 40126 Bologna, Italia.}
    \email{claudio.onorati@unibo.it}

    \author[F.\ Rizzo]{Francesca Rizzo}
    \address{Francesca Rizzo, Université Paris Cité and Sorbonne Université, CNRS, IMJ-PRG, F-75013 Paris, France}
    \email{francesca.rizzo@imj-prg.fr}

    \author[S.\ Viktorova]{Sasha Viktorova}
    \address{Sasha Viktorova, Mathematics Division, National Center for Theoretical Sciences, National Taiwan University, Taipei City 106, Taiwan}
    \email{sasha.viktorova@ncts.ntu.edu.tw}


\thanks{}

\begin{abstract}
We study the behavior of birational automorphism groups in families of projective hyper-Kähler manifolds.
\end{abstract}

\maketitle

\setcounter{tocdepth}{1}

\tableofcontents

 \section{Introduction}
Given a projective variety \(Y\), the groups \(\Aut(Y)\) and \(\Bir(Y)\) play an important role in understanding the geometry of \(Y\). For example, very recently it has been proven that $\mathrm{Bir}(Y)$ determines whether a projective variety is rational or ruled \cite{RUvS25}. Moreover, they are central objects in some important conjectures in algebraic geometry. Among these, one of the most significant ones is the Kawamata--Morrison cone conjecture, which roughly states that the nef (respectively, movable) cone of a Calabi--Yau manifold \(Y\) (in the broader sense), defined as the cone in the Néron--Severi space spanned by nef (respectively, movable) divisor classes, is potentially rational polyhedral. If this is not the case, the obstruction is given by the group \(\Aut(Y)\) (respectively, \(\Bir(Y)\)), which in such situations is infinite.

Our interest in the groups \(\Aut(Y)\) and \(\Bir(Y)\) stems from the fact that when a projective variety \(Y\) is \(K\)-trivial (that is, when the canonical divisor \(K_Y\) is numerically trivial), these groups provide information on whether \(Y\) is a \emph{Mori dream space}. Roughly speaking, a Mori dream space is a normal and \(\Q\)-factorial projective variety $X$ with vanishing irregularity behaving nicely under the Minimal Model Program. Algebraically, being a Mori dream space is characterized as follows. Let $\Gamma$ be a finitely generated group of Weil divisors mapping surjectively onto the class group. Then $X$ is a Mori dream space if and only if the algebra
$$
\bigoplus_{D\in \Gamma}H^0(X,\mathscr{O}_X(D))
$$
is finitely generated. To give a concrete example, Fano varieties (that is, smooth projective varieties with ample anticanonical divisor) are Mori dream spaces (see \cite{BCHM10}), and a K3 surface \(S\) is a Mori dream space if and only if its automorphism group \(\Aut(S)\) is finite (see \cite{AHL:MoriDreamK3}). Conjecturally, a projective {\hk} manifold $X$ is a Mori dream space if and only if the group $\mathrm{Bir}(X)$ is finite (see \cite[Corollary 3.10]{Den22}).  Recall that a \hk\ manifold is a simply connected compact Kähler manifold \(X\) such that $H^0(X,\Omega_X^2)=\mathbf{C}\sigma$, where \(\sigma\) is a holomorphic symplectic form.

A natural problem is to study the behavior of the groups \(\Aut(Y)\) and \(\Bir(Y)\) in families. In this paper, a \emph{family} is a surjective, proper and flat morphism $\varphi\colon \mathscr{X}\to S$ of complex analytic spaces, with $S$ connected. We say that the family \(\varphi\) is polarized (or projective) if there exists a \(\varphi\)-ample line bundle \(\cL\).  Oguiso proves in \cite{Oguiso03} that for a nontrivial family $\varphi\colon \mathscr{X} \to \Delta$ of projective K3 surfaces over a disk $\Delta$, there exists a dense subset of \(\Delta\) over which the fibers have infinite automorphism group. In particular, being a Mori dream space is not an open condition in families of K3 surfaces. This phenomenon does not occur, for example, with Fano varieties:
for a smooth family $\mathscr{X}\to (\Delta,0)$ whose central fiber $\mathscr{X}_0$ is Fano, the fiber $\mathscr{X}_s$ is also Fano (and hence a Mori dream space) for $s$ close enough to $0$.

The purpose of this note is to study the behavior of birational automorphism groups in families of projective \hk\ manifolds, thereby generalizing Oguiso's results on K3 surfaces. More precisely, we focus on families over a disk. Then, let \(\varphi\colon \mathscr{X} \to \Delta\) be a nontrivial family of \hk\ manifolds over the unit disk, endowed with the Euclidean topology. By \cite[Theorem~1.1]{Oguiso03}, the Picard rank of the fibers attains its minimum on a subset \(\cG \subset \Delta\), whose complement \(\cS\subset \Delta\) is dense and countable.

We first study the group of birational automorphisms of a very general fiber $\cX_t$, with $t\in \cG$.
\begin{thm}\label{thm:main1}
    Let $\varphi\colon\mathscr{X}\to \Delta$ be a nontrivial polarized family of \hk\ manifolds, 
    over a small disk $\Delta$.  
    There exists a (possibly empty) finite subset $\cF\subset \cS$, a group $G^0$ and a constant $N \coloneqq N(\varphi)$ such that
    $$
        G^0<{\Mon}^2_{\Bir}(\mathscr{X}_t) \quad  \text{for all}\ t\in \Delta\setminus \cF,
    $$
    and
    $$
    [{\Mon}^2_{\Bir}(\mathscr{X}_t) : G^0]\le N \quad \text{for all}\ t\in\cG.
    $$
    In particular, the map 
    \begin{align*}
         \Bir \colon \Delta &\lra \{\text{groups}\}/\equiv\\
         t&\mapsto \Bir(\mathscr{X}_t),
    \end{align*}
    is \enquote{upper-semicontinuous} with respect to the co-finite topology on $\Delta\setminus \cF$, in the sense that the index of $G^0$ in ${\Mon}^2_{\Bir}(\mathscr{X}_t)$ can only become infinite at special points in $\mathcal{S}\setminus\mathcal{F}$.
\end{thm}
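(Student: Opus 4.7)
The plan is to combine parallel transport on the local system $R^2\varphi_*\Z$ with Markman's refinement of the global Torelli theorem for \hk\ manifolds, thereby reducing the statement to a lattice-theoretic analysis of monodromy isometries of a common second cohomology lattice. After possibly shrinking $\Delta$ so that $R^2\varphi_*\Z$ has trivial monodromy, parallel transport identifies the lattices $(H^2(\mathscr{X}_t,\Z),q_t)$ with a fixed BBF-lattice $(\Lambda,q)$, preserving the polarization class $\ell\in\Lambda$. By \cite{Oguiso03} there is a sublattice $\Lambda_{\min}\subset\Lambda$ containing $\ell$ such that $\Pic(\mathscr{X}_t)=\Lambda_{\min}$ for $t\in\mathcal{G}$, while $\Lambda_{\min}\subsetneq\Pic(\mathscr{X}_t)$ for $t\in\mathcal{S}$.

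I would define the candidate subgroup as
\[
    G^0 \coloneqq \bigl\{g\in\Mon^2(\Lambda)\;:\;g(\Lambda_{\min})=\Lambda_{\min},\; g|_{T_{\min}}=\pm\mathrm{id},\; g(\Mov_{\mathrm{gen}})=\Mov_{\mathrm{gen}}\bigr\},
\]
where $T_{\min}\coloneqq\Lambda_{\min}^{\perp}$ and $\Mov_{\mathrm{gen}}\subset\Lambda_{\min,\R}$ is the movable cone of any fiber over $\mathcal{G}$. The first two conditions guarantee that $g$ is a Hodge isometry of $H^2(\mathscr{X}_t,\Z)$ for \emph{every} $t\in\Delta$, since $\pm\mathrm{id}$ on $T_{\min}$ preserves any polarized Hodge structure on it. Markman's Torelli theorem then realizes $g$ as a birational self-map of $\mathscr{X}_t$ whenever $g$ also preserves the movable cone $\Mov(\mathscr{X}_t)$.

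For $t\in\mathcal{G}$ the inclusion $G^0<\Mon^2_{\Bir}(\mathscr{X}_t)$ is tautological. For $t\in\mathcal{S}$, the cone $\Mov(\mathscr{X}_t)\subset\Mov_{\mathrm{gen}}$ is strictly refined by extra walls supported on classes in $\Pic(\mathscr{X}_t)\cap T_{\min}$; since $g\in G^0$ acts by $\pm\mathrm{id}$ on $T_{\min}$, each additional wall is setwise fixed. The only obstruction to $g$ preserving $\Mov(\mathscr{X}_t)$ is thus a swap of the ample chamber of $\mathscr{X}_t$ with an adjacent one, an algebraic condition on $t$; by Markman--Mongardi's uniform boundedness of wall-divisor classes, it can hold at only finitely many special points, which form the exceptional set $\mathcal{F}\subset\mathcal{S}$.

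For the index bound, let $t\in\mathcal{G}$ and $g\in\Mon^2_{\Bir}(\mathscr{X}_t)$. Then $g$ automatically preserves $\Lambda_{\min}=\Pic(\mathscr{X}_t)$ and $\Mov_{\mathrm{gen}}=\Mov(\mathscr{X}_t)$, so the discrepancy with $G^0$ is measured by the restriction of $g$ to $T_{\min}$ modulo $\pm\mathrm{id}$. Since the action of $\Mon^2(\Lambda)$ on the discriminant group $A_\Lambda$ factors through the finite group $O(A_\Lambda)$, and a lift of a prescribed action on $\Lambda_{\min}$ to an isometry of $\Lambda$ is determined up to $\{\pm\mathrm{id}\}$ on $T_{\min}$ by its discriminant image, the index $[\Mon^2_{\Bir}(\mathscr{X}_t):G^0]$ is bounded by a constant $N=N(\varphi)$ depending only on $\Lambda$ and $\Lambda_{\min}$. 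The main obstacle in this plan is the wall-chamber analysis at special fibers: proving $\mathcal{F}$ is finite rather than merely discrete requires a careful application of Markman--Mongardi's theory to show that the classes producing extra walls lie in finitely many $\Mon^2(\Lambda)$-orbits.
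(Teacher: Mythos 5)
Your overall strategy coincides with the paper's: identify all the lattices $H^2(\mathscr{X}_t,\Z)$ via the (trivial, since $\Delta$ is a disk) local system, take the minimal Néron--Severi lattice $\Lambda_0=\Lambda_{\min}$ from Oguiso's result, define $G^0$ as the monodromy isometries that preserve $\Lambda_{\min}$, act (essentially) trivially on $\Lambda_{\min}^{\perp}$, and send the polarization into the generic movable cone, and then use the Hodge-theoretic Torelli/Markman characterization $\Mon^2_{\Bir}(\mathscr{X}_t)=\{g\in\Mon^2_{\Hdg}(\mathscr{X}_t)\mid g(\ell)\in\Int(\Mov(\mathscr{X}_t))\}$ together with a discriminant-group argument for the index bound. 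Up to that point your plan is sound and matches Lemma~\ref{lem:groupG_t}.

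The genuine gap is exactly where you flag it: the finiteness of $\mathcal{F}$. Two problems. First, your intermediate claim that the extra walls of $\Mov(\mathscr{X}_t)$ at a special fiber are ``supported on classes in $\Pic(\mathscr{X}_t)\cap T_{\min}$'' is false: a new prime exceptional class $E$ on $\mathscr{X}_t$ lies in $\Lambda_t\supsetneq\Lambda_{\min}$ but in general has a nonzero projection $E_0$ to $\Lambda_{\min}\otimes\Q$, and it is precisely this projection that matters, since $q(g(\ell),E)=q(g(\ell),E_0)$ can be $\le 0$ even though $q(\ell,E)>0$. So the walls are \emph{not} setwise fixed by $g$, and the obstruction is not merely a chamber swap in the nef cone but the failure of $g(\ell)$ to stay inside $\Int(\Mov(\mathscr{X}_t))$. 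Second, the appeal to uniform boundedness of wall-divisor (MBM) squares cannot by itself yield finiteness of $\mathcal{F}$: the special locus $\cS$ is countable and dense in the disk, and for each $t_n\in\cS$ one could a priori have a different obstructing class $E_{t_n}$ of bounded square; ``an algebraic condition on $t$'' has no meaning here that forces finiteness, and finitely many $\Mon^2(\Lambda)$-orbits of such classes would still allow infinitely many bad $t$. The paper closes this gap with a geometric degeneration argument you do not have: since $g(\ell)\in\Lambda_0$ is effective and positive on every fiber, any obstructing prime exceptional divisor $E_t$ occurs in the support of an effective representative of $g(\ell)$, whence $0<q(\ell,E_t)\le q(\ell,g(\ell))$; combined with the lower bound on $q(E_t)$ this bounds the Hilbert polynomials, so infinitely many bad $t$ would force a component of the relative Hilbert scheme $\mathrm{Hilb}^{\ell}_{\mathscr{X}/\Delta}$ to dominate $\Delta$, and by properness one would obtain an effective divisor on a very general fiber pairing non-positively with $g(\ell)\in\Int(M)$ --- a contradiction. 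Without some such specialization argument (Hilbert scheme or an equivalent limiting construction), your proof of the first assertion of the theorem is incomplete.
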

In particular, Theorem~\ref{thm:main1} shows that if $\Bir(\cX_{t_0})$ is finite for some $t_0 \in \cG$, then $\Bir(\cX_t)$ is finite for all $t \in \cG$. By contrast, special fibers may exhibit different behavior, acquiring an infinite group of birational automorphisms.  We show that, if the fibers of the polarized family belong to one of the known deformation classes (K$3^{[n]}$, $\mathrm{Kum}^n$, OG$6$, or OG${10}$), this phenomenon always occurs on a dense, countable subset. Note that it may happen that all the members of the family have an infinite group of birational automorphisms (see Example \ref{ex:fanooflines}.(1)).

\begin{thm}\label{thm:main2}
 Let $\varphi\colon\mathscr{X}\to \Delta$ be a nontrivial polarized family of \hk\ manifolds, 
    over a small disk $\Delta$, whose fibers belong to one of the known deformation classes of hyper-Kähler manifolds. Then there is a subset $\mathcal{D}\subset \mathcal{S}$ such that $\mathcal{D}$ is dense in $\Delta$ and such that 
    \[
    |\Bir(\mathscr{X}_t)|=\infty, \text{ for all } t \in \mathcal{D}
    \]
\end{thm}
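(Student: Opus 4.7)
The plan is to combine a refined Noether-Lefschetz density argument with the Kawamata-Morrison cone conjecture, which holds for hyper-Kähler manifolds in each of the four known deformation classes by the work of Markman-Yoshioka and Amerik-Verbitsky. Let $\Lambda$ denote the abstract Beauville-Bogomolov lattice of the deformation type and $h \in \Lambda$ be the polarization class. By the local Torelli theorem and the non-triviality of $\varphi$, a marked lift of the period map, still denoted $\varphi\colon \tilde\Delta \to \Omega_h$, is holomorphic and non-constant, so its image is a one-dimensional analytic curve $C \subset \Omega_h$. For each $v \in h^\perp \setminus \{0\}$, the Noether-Lefschetz locus $N_v = \{\omega \in \Omega_h : \omega \perp v\}$ is a divisor, and the preimage $\varphi^{-1}(N_v)$ is a discrete subset of $\tilde\Delta$; the union over all $v$ is dense in $\tilde\Delta$ by density of rational hyperplanes in $\Omega_h$ in the classical topology.

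The main step is to refine this density while constraining $v$ so that the Picard lattice at each intersection point forces $\Bir(\mathscr{X}_t)$ to be infinite. Concretely, I would isolate a countable family $\{v_i\}_{i \in \N} \subset h^\perp$ such that the saturated sublattice $P_i = \langle \Pic_{\mathrm{gen}},\, v_i\rangle^{\mathrm{sat}} \subset \Lambda$ (where $\Pic_{\mathrm{gen}}$ is the generic Picard lattice of the family) satisfies:
\begin{enumerate}
\item $P_i$ is anisotropic, i.e.\ does not represent zero over $\Q$, so that the extremal rays of the positive cone in $P_i \otimes \R$ are irrational and $O^+(P_i)$ is infinite;
\item $P_i$, viewed as a sublattice of $\Lambda$, contains no MBM class in the sense of Amerik-Verbitsky;
\item the collection $\{N_{v_i}\}$ is dense in $\Omega_h$ in the classical topology.
\end{enumerate}
Under (1) and (2) the movable cone of any hyper-Kähler $X$ with $\Pic(X) = P_i$ coincides with the positive cone in $P_i \otimes \R$, hence is not rational polyhedral by (1); the Kawamata-Morrison conjecture then forces $\Bir(X)$ to be infinite. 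Taking $\mathcal D$ to be the subset of $\bigcup_i \varphi^{-1}(N_{v_i})$ where the Picard lattice is exactly $P_i$ (the generic case within each Noether-Lefschetz locus) and descending to $\Delta$ yields a countable dense subset on which $|\Bir(\mathscr{X}_t)| = \infty$.

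The hard part is the construction of the family $\{v_i\}$, and in particular the verification of (2), which is a lattice-theoretic input depending on the deformation type. For K$3^{[n]}$-type, MBM classes are primitive vectors $e \in \Lambda$ of square $-2$ or $-2(n-1)$ with prescribed divisibility (Bayer-Hassett-Tschinkel, Mongardi), so excluding them reduces to finitely many congruence conditions on $(v^2, h\cdot v)$ modulo a fixed integer; analogous classifications exist for $\mathrm{Kum}^n$, OG$6$ and OG${10}$. In each case, strong approximation in the indefinite lattice $h^\perp$ produces infinitely many $v_i$ satisfying (1)-(2), which can moreover be arranged to be dense in any prescribed open subcone of $h^\perp_\R$, yielding (3). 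The case where the generic Picard rank is already $\ge 2$ is handled by the same argument, imposing the lattice conditions on the enlarged lattice $P_i$.
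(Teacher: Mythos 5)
Your strategy coincides with the paper's treatment of one case only: the case where the generic Picard rank is $1$ and the special fibres have Picard rank $2$. There the paper likewise constructs, by an explicit lattice argument (Lemma~\ref{lem:schifo}), rank-two primitive sublattices $L_{n,h}=\Z\langle \ell, na+h\rangle$ of $\Lambda$ containing no isotropic vectors and no vectors of square $-i$ for $0\le i\le N$ (with $N$ the lower bound on squares of stably exceptional classes), and it obtains density by perturbing a given Noether--Lefschetz hyperplane $H_a$ through a special point to the nearby hyperplanes $H_{a+h/n}$, which still meet the period curve for $n\gg 0$. For that case your outline is essentially the paper's proof, modulo two points you should make explicit: (i) you need the Picard lattice at the chosen intersection point to be \emph{exactly} $P_i$, which over a one-dimensional base is not a genericity statement (the intersection of the period curve with each Noether--Lefschetz divisor is discrete); the paper gets this from the primitivity of $L_{n,h}$ in $\Lambda$ together with the standing assumption, in that case, that all special fibres have rank exactly $2$; and (ii) density of the \emph{restricted} union $\bigcup_i\varphi^{-1}(N_{v_i})$ does not follow from density of all Noether--Lefschetz loci, and your appeal to strong approximation does not by itself place these loci near every point of the base; some version of the perturbation argument is needed.

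The genuine gap is your final sentence. When the generic Picard lattice $\Lambda_0$ has rank $\ge 2$, conditions (1) and (2) are in general unsatisfiable for \emph{every} choice of $v_i$: the saturation $P_i$ contains $\Lambda_0$, so if $\Lambda_0$ already represents $0$ or already contains an MBM or stably exceptional class, no $v_i$ can remove it; and by Meyer's theorem every indefinite lattice of rank $\ge 5$ is isotropic, so anisotropy of $P_i$ is outright impossible once $\rank(\Lambda_0)\ge 4$. Failure of (1)--(2) does not of course imply that $\Bir$ is finite, but your construction then produces nothing. This is precisely why the paper's proof of Theorem~\ref{thm:main2} splits into cases and, when the special fibres have Picard rank $\ge 3$, argues by contradiction in an entirely different way: assuming $\Bir(\mathscr{X}_{t_n})$ finite for all $n$, it invokes the birational cone conjecture to get finitely many prime exceptional divisors generating $\mathrm{Eff}(\mathscr{X}_{t_n})$, bounds their pairings with $\ell$ (using Proposition~\ref{prop:finmanylattices} and Lemma~\ref{lem:finmanymovcones}, or, in the rank-one case, a compactness argument with the normalized classes $E_{n,i}/\lVert E_{n,i}\rVert$ accumulating on an isotropic ray), and then degenerates these divisors via the relative Hilbert scheme to force too many algebraic classes onto a very general fibre. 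You would need to supply an argument of this kind to cover the higher-rank cases; as written, your proposal does not.
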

Theorem \ref{thm:main2} tells us that, as one may expect, for all known deformation classes of {\hk} manifolds, the property of being a Mori dream space is not open in families. An immediate consequence of Theorem \ref{thm:main2} is the following.
\begin{cor}\label{cor:corollary}
Let $\varphi \colon \cX \to \Delta$ be a nontrivial family of projective hyper-Kähler manifolds belonging to one of the known deformation types over a disk $\Delta$. 
Then there exists a dense subset $\mathcal{D} \subset \Delta$ such that 
\[
    |\Bir(\cX_t)| = \infty
    \quad \text{for all } t \in \mathcal{D}.
\]
\end{cor}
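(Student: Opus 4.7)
The plan is to reduce directly to Theorem~\ref{thm:main2}: if I can show that every open sub-disk $\Delta'\subset\Delta$ contains a smaller open disk $\Delta''\subset\Delta'$ on which the restriction of $\varphi$ is a nontrivial \emph{polarized} family, then Theorem~\ref{thm:main2} applied to $\varphi|_{\Delta''}$ will produce a point of $\Delta'$ with infinite $\Bir$, and density in $\Delta$ will follow.

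To construct $\Delta''$, I would pick any point $t_0\in \Delta'\cap\mathcal{G}$, which exists because $\mathcal{S}$ is countable while $\Delta'$ is uncountable. Since $\mathscr{X}_{t_0}$ is projective by hypothesis, it carries an ample class $L_0\in\Pic(\mathscr{X}_{t_0})$. The key observation is that, because $t_0$ lies in the generic stratum, $\Pic(\mathscr{X}_{t_0})$ coincides with the \enquote{generic Picard lattice} of $\varphi$: any class in $\Pic(\mathscr{X}_{t_0})$ remains of Hodge type $(1,1)$ on every fiber of $\varphi$, since its Hodge locus in $\Delta$ is closed analytic and contains the dense set $\mathcal{G}$ (the opposite would force $t_0$ to exhibit a Picard rank jump, contradicting $t_0\in\mathcal{G}$). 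Thus $c_1(L_0)$ lifts, via the Leray spectral sequence on the contractible Stein base $\Delta$, to a line bundle $\mathscr{L}$ on $\mathscr{X}$ restricting to $L_0$ at $t_0$. The openness of ampleness in flat proper families then supplies an open disk $\Delta''\subset\Delta'$ around $t_0$ on which $\mathscr{L}$ is $\varphi$-ample, and the identity principle applied to the period map ensures that $\varphi|_{\Delta''}$ remains nontrivial (otherwise the period map would be constant on $\Delta''$, hence on $\Delta$, forcing $\varphi$ to be isotrivial).

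Applying Theorem~\ref{thm:main2} to the polarized nontrivial family $\varphi|_{\Delta''}$ yields a subset of $\Delta''$ dense in $\Delta''$ consisting of fibers with infinite birational automorphism group; in particular at least one such fiber lies in $\Delta'$. Since $\Delta'$ was arbitrary, the set $\mathcal{D}$ of parameters $t\in\Delta$ with $|\Bir(\mathscr{X}_t)|=\infty$ is dense in $\Delta$.

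The main technical obstacle is the extension step: identifying $\Pic(\mathscr{X}_{t_0})$ with the generic Picard lattice for $t_0\in\mathcal{G}$ and upgrading an ample line bundle on the single fiber $\mathscr{X}_{t_0}$ to a relatively ample line bundle on a sub-disk. This combines a density/closedness argument on Hodge loci, a Leray spectral sequence computation on the Stein disk, and the openness of ampleness; none of the three ingredients is deep in isolation, but they must be fitted together carefully to obtain a polarized sub-family to which Theorem~\ref{thm:main2} can be applied.
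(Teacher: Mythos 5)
Your proposal is correct and follows essentially the same route as the paper: both arguments locate a very general point inside an arbitrary open subset of $\Delta$, use the fact that the generic Picard lattice $\Lambda_0$ stays algebraic on every fiber to produce a relatively ample line bundle on a small sub-disk, observe that the restricted family remains nontrivial, and then invoke Theorem~\ref{thm:main2}. The only difference is presentational (the paper argues by contradiction on $\Delta\setminus\overline{\mathcal{D}}$, while you argue directly on an arbitrary open sub-disk, and you spell out the Hodge-locus/Leray/openness-of-ampleness details that the paper leaves implicit).
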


In particular, Corollary \ref{cor:corollary} allows us to drop the projectivity assumption in Theorem \ref{thm:main2}.

We conclude this introduction with two examples applying Theorem \ref{thm:main1} and Theorem \ref{thm:main2}.

\begin{ex}\label{ex:fanooflines} The first example provides families of hyper-Kähler fourfolds whose members all have infinite birational automorphism groups. The second provides families of hyper-Kähler fourfolds for which the very general member has Picard rank $1$ (and hence a finite birational automorphism group), whereas the special members have an infinite birational automorphism group and form a dense subset of the base of the family.
\begin{enumerate}
\item Let \(\pi \colon \mathscr{X} \to (\Delta,0)\) be a smooth projective family of cubic fourfolds over a small disk such that the central fiber lies on more than one Hassett divisor, while the very general fiber lies on the Hassett divisor \(\mathcal{C}_{12}\). Consider the relative Fano variety of lines
\[
F_1(\pi)\colon F_1(\mathscr{X}/\Delta) \to \Delta,
\]
which is a nontrivial family under our assumptions. The very general fiber of \(F_1(\pi)\) is a {\hk} fourfold with infinite birational automorphism group, by \cite[Theorem~31]{HT10}, and has Picard number equal to \(2\). By Theorem~\ref{thm:main1}, the special fibers of \(F_1(\pi)\) with infinite birational automorphism group form a dense subset of \(\Delta\). Hence, after removing finitely many points from \(\Delta\) (namely, those lying in \(\mathcal{F}\)), all remaining fibers have infinite birational automorphism group.

An analog example can be constructed using the divisor \(\mathcal{C}_{20}\) and \cite[Lemma~4.3]{BFM24}, which asserts that the Fano variety of lines of a very general element of \(\mathcal{C}_{20}\) has an infinite birational automorphism group.

\item Let \(\pi \colon \mathscr{X} \to (\Delta,0)\) be a smooth projective family of cubic fourfolds over a small disk such that the central fiber lies on some Hassett divisor, while the very general fiber $\mathscr{X}_s$ does not lie on any Hassett divisor, that is,
\[
H^4(\mathscr{X}_s,\mathbf{Z}) \cap H^{2,2}(\mathscr{X}_s,\mathbf{C}) = \mathbf{Z} h^2 .
\]
Consider the associated relative Fano variety of lines
\[
F_1(\pi)\colon F_1(\mathscr{X}/\Delta) \to \Delta .
\]
By assumption, the family \(F_1(\pi)\) is nontrivial. The very general fiber of \(F_1(\pi)\) is a {\hk} fourfold with finite birational automorphism group, as its Picard number is equal to \(1\). By Theorem~\ref{thm:main2}, the special fibers of \(F_1(\pi)\) with infinite birational automorphism group form a dense subset of \(\Delta\).
\end{enumerate}
\end{ex}

\subsection*{Acknowledgments} 
We thank Simon Brandhorst and Keiji Oguiso for useful discussions, and Pietro Beri for bringing \cite[Lemma 3.3]{BMP25} to our attention, which led us to the proof of Proposition \ref{prop:finmanylattices}. Denisi and Rizzo were supported by the European Research Council (ERC) under European Union's Horizon 2020 research and innovation programme (ERC-2020-SyG-854361-HyperK). Denisi was also supported by the Deutsche Forschungsgemeinschaft (DFG, German Research Foundation) – Project ID 286237555 (TRR 195). Onorati was supported by the European Union - NextGenerationEU under the National Recovery and Resilience Plan (PNRR) - Mission 4 Education and research - Component 2 From research to business - Investment 1.1 Notice Prin 2022 - DD N. 104 del 2/2/2022, from title “Symplectic varieties: their interplay with Fano manifolds and derived categories”, proposal code2022PEKYBJ – CUP J53D23003840006.

\section{Preliminaries}
In this section, we collect the main definitions, tools, and results needed for the rest of the paper. For a general introduction to {\hk} manifolds, we refer the reader to \cite{GHJ03}. Given a {\hk} manifold $X$, we recall that, thanks to the work \cite{Beauville83a} of Beauville, there exists a non-degenerate quadratic form $q_X$ on $H^2(X,\mathbf{C})$, known as the Beauville-Bogomolov-Fujiki form (BBF form in what follows). 
\subsection{Lattice theory and {\hk} manifolds}

A lattice is a pair $(L,b)$, where $L$ is a finitely generated, free abelian group, and $b$ is a non-degenerate, integer-valued, symmetric bilinear form $b \colon L \times L \to \mathbf{Z}$. If no confusion arises, we will write $L$ and set $(x,y):=b(x,y)$, for every $x,y \in L$. We say that  $L$ is even if $b(x,x)$ is even for any $x \in L$. The signature $\mathrm{sign}(b)$ of $b$ is the signature of the natural extension $b_{\mathbf{R}}$ of $b$ to $L \otimes_{\mathbf{Z}} \mathbf{R}$. The divisibility $\mathrm{div}_L(x)$ of an element $x \in L$ is defined as the positive generator of the ideal $b(x,L)$ in $\mathbf{Z}$. A \emph{rescaling} of a lattice $L$ is $L(k)$ for some integer $k$, that is, the matrix of the bilinear form of $L$ is multiplied by $k$.

As $b$ is non-degenerate, one has an injective group homomorphism $L \hookrightarrow L^{\vee}:=\mathrm{Hom}_{\mathbf{Z}}(L,\mathbf{Z})$, by sending an element $x$ of $L$ to the element $b(x,-)$ of $L^{\vee}$. Note that we have the following identification
\[
L^{\vee}= \{x\in L\otimes_{\mathbf{Z}} \mathbf{Q} \; | \; b(x,y) \in \mathbf{Z}, \text{ for any } y \in L\}\subset L \otimes_{\mathbf{Z}} \mathbf{Q}.
\]

The discriminant group of $L$ is the finite group $D_L:=L^{\vee}/L$. If $D_L$ is trivial we say that $L$ is unimodular. We say that an element $x \in L$ is primitive if we cannot write $x=ax'$, where $a\neq 1$, $a \in \mathbf{Z}_{>0}$, and $x' \in L$. 

An isometry of $L$ is an automorphism of it (as an abelian group) preserving $b$.  The group of isometries of $L$ is denoted by $\text{O}(L)$. Suppose now that $L$ has signature $\mathrm{sign}(b)=(3,b_2-3)$, where $b_2 \in \mathbf{N}$, and $b_2>3$. Define the cone
\[
C_L:=\{v \in L\otimes_{\mathbf{Z}} \mathbf{R} \;|\; b(v)>0\}.
\] 
In \cite[Lemma 4.1]{Markman11} it has been proven that $C_L$ has the homotopy type of $S^2$, hence $H^2(C_L,\mathbf{Z})\cong \mathbf{Z}$. Any isometry in $\text{O}(L)$ induces a homeomorphism of $C_L$, which in turn induces an automorphism of $H^2(C_L,\mathbf{Z})\cong \mathbf{Z}$. This automorphism can act as $1$ or $-1$. We define $\text{O}^+(L)$ as the subgroup of the isometries of $L$ acting trivially on $H^2(C_L,\mathbf{Z})$.

Given a {\hk} manifold $X$, one has the lattice $(H^2(X,\mathbf{Z}),q_X)$, where (with some abuse of notation) by $q_X$ we mean the integral valued, symmetric bilinear form induced by the restriction of the BBF form to $H^2(X,\mathbf{Z})$. Note that, for any of the known {\hk} manifolds, the lattice $H^2(X,\mathbf{Z})$ is even and contains three copies of the hyperbolic lattice $U$ (see, for example, \cite{Rapagnetta08}). The signature of $q_X$ on $H^2(X,\mathbf{Z})$ is $(3, b_2(X)-3)$, where $b_2(X)$ is the second Betti number of $X$. We will denote the discriminant group $D_{H^2(X,\mathbf{Z})}$ by $D_X$. Also $(\mathrm{Pic}(X),q_X)$ is a lattice, and in this case, the signature of $q_X$ is $(1,\rho(X)-1)$, where $\rho(X)$ is the Picard number of $X$. We refer the reader to \cite[Corollary 23.11]{GHJ03} for proof of these facts.

The following lemma will be useful for the proof of Theorem~\ref{thm:main2}.
\begin{lem}
    Let $M = U^{\oplus 3}\oplus T$ be an \emph{even} lattice. For every primitive vector $\ell\in M$ and every vector $a\in M$, the lattice $\langle \ell, a\rangle^\perp$ contains a hyperbolic plane.
\end{lem}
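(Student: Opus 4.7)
The plan is to reduce the statement to the unimodular sublattice $N := U^{\oplus 3}$ via orthogonal projection, and then apply Eichler's criterion twice — first inside $N$, and then inside a copy of $U^{\oplus 2}$ — to place $\ell$ and $a$ in convenient standard positions.

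First, I would use the orthogonal decomposition $M = N \oplus T$ and denote by $\ell_N, a_N \in N$ the projections of $\ell, a$ onto $N$. Since any $x \in N$ is orthogonal to $T$, one has $(x, v) = (x, v_N)$ for every $v \in M$, and hence
\[
\langle \ell, a\rangle^\perp \cap N = \langle \ell_N, a_N\rangle_N^\perp,
\]
so it suffices to find a hyperbolic plane inside the right-hand side. Since $N$ is unimodular (so $D_N = 0$) and contains $U^{\oplus 2}$, Eichler's criterion asserts that $O(N)$ acts transitively on primitive vectors of a given square. After replacing $\ell_N$ by its primitivization (which does not affect $\ell_N^\perp$) and applying an appropriate isometry of $N$ to both $\ell_N$ and $a_N$ simultaneously, we may therefore assume $\ell_N = e_1 + k f_1 \in U_1$ with $2k = \ell_N^2$. (If $\ell_N = 0$ then directly $N \subset \ell^\perp$, and the argument proceeds with $a_N$ alone.)

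Next, I would decompose $a_N = a_N^{(1)} + a_N^{(23)}$ according to the splitting $N = U_1 \oplus (U_2 \oplus U_3)$. For any $x \in U_2 \oplus U_3$ one has $(x, \ell_N) = 0$ and $(x, a_N) = (x, a_N^{(23)})$, which reduces the problem to finding a hyperbolic plane inside $(a_N^{(23)})^\perp$ computed within the unimodular lattice $U_2 \oplus U_3 \cong U^{\oplus 2}$. A second application of Eichler's criterion in $U^{\oplus 2}$ moves (the primitivization of) $a_N^{(23)}$ into $U_2$, and then $U_3$ sits in its orthogonal complement. Pulling back through the chosen isometries produces the required hyperbolic plane inside $\langle \ell, a\rangle^\perp$.

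The main technical point that will need to be checked is the applicability of Eichler's criterion inside the unimodular lattice $U^{\oplus 2}$ itself (i.e.\ without an additional orthogonal summand); this reduces to the classical fact that in an even indefinite unimodular lattice containing $U^{\oplus 2}$, primitive vectors are determined up to $O$-orbit by their square alone, and everything else in the argument is bookkeeping of the orthogonal projections.
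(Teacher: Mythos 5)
Your proof is correct and follows essentially the same strategy as the paper's: two successive applications of Eichler's criterion, first to confine (the primitive part of) $\ell$ to one hyperbolic plane and then to confine the relevant part of $a$ to a second one, leaving a third copy of $U$ in $\langle \ell, a\rangle^\perp$. The only differences are bookkeeping: by projecting orthogonally onto the unimodular summand $U^{\oplus 3}$ from the start you avoid the paper's appeal to Nikulin's extension lemma to extend the isometry of $U^{\oplus 3}$ to all of $M$, and your replacement of $a_N$ by its component in $U_2\oplus U_3$ plays the role of the paper's substitution $a \mapsto (\ell^2)a-(a\cdot\ell)\ell$.
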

\begin{proof}
    For every $i=1,2,3$, denote by $e_i, f_i$ a basis of each copy of $U$ inside $M$.
    
    Write $\ell$ as $am + t$, where $m$ is a primitive vector in $U^{\oplus 3}$. Since $U^{\oplus 3}$ is unimodular, by Eichler's Lemma, there exists an isomoetry $f_1$ of $U^{\oplus 3}$ that sends $m$ to some vector $u_1= e_1 - bf_1$. Since $U^{\oplus 3} = T^\perp$ is unimodular, and the lattice $T$ is primitive in $M$, by \cite[Corollary 1.5.2]{Nikulin80} (see \cite[Section~1.1]{rizzo2024} for more details), the isometry $f_1$ extends to some isometry $\widetilde f_1$ of $M$, which is the identity on $T$. Hence, we can suppose that the vector $\ell$ is of the form $a(e_1-bf_1)+t$. Therefore, the lattice $\ell^\perp$ decomposes as $U^{\oplus 2} \oplus T_1$. Note that the vector $b \coloneqq (\ell^2)a - (a\cdot \ell) \ell$ lies in $\ell^\perp$ and that we have equality of lattices $\langle \ell, a\rangle^\perp=\langle \ell, b\rangle^\perp$. Hence it is enough to show that the $\beta^\perp\subset \ell^\perp$ contains a copy of $U$, and this is done using the same reasoning as above.
\end{proof}
\subsection{Monodromy operators and cones in the Néron--Severi space}
Let $\pi \colon \mathscr{X}\to S$ be a family of {\hk} manifolds over a reduced base $S$, whose central fiber is a fixed {\hk} manifold $X$. Let $R^k\pi_{*}\mathbf{Z}$ be the $k$-th higher direct image of $\mathbf{Z}$ . The space $S$ is locally contractible, and the family $\pi$ is topologically locally trivial (see for example \cite[Theorem 14.5]{GHJ03}). The sheaf $R^k\pi_{*}\mathbf{Z}$ is the sheafification of the presheaf
\[
U \mapsto H^k(\pi^{-1}(U),\mathbf{Z}), \; \text{ for any open subset } U\subset S,
\]   
which is a constant presheaf, by the local contractibility of $S$ and the local triviality of $\pi$. Then $R^k\pi_{*}\mathbf{Z}$ is a locally constant sheaf, i.e. a local system, for every $k \in \mathbf{N}$. Now, let $\gamma \colon [0,1] \to S$ be a continuous path. Then $\gamma^{-1}\left(R^k\pi_{*}\mathbf{Z}\right)$ is a constant sheaf for every $k$.

\begin{defin}
\begin{enumerate}
\item Set $\pi^{-1}(\gamma(0))=X$ and $\pi^{-1}(\gamma(1))=X'$. The parallel transport operator $T^k_{\gamma, \pi}$ associated with the path $\gamma$ and $\pi$ is the isomorphism $$T^k_{\gamma,\pi}\colon H^k(X,\mathbf{Z})\to H^k(X',\mathbf{Z})$$ between the stalks at 0 and 1 of the sheaf $R^k\pi_{*}\mathbf{Z}$, induced by the trivialization of $\gamma^{-1}\left(R^k\pi_{*}\mathbf{Z}\right)$. The isomorphism $T^k_{\gamma,\pi}$ is well defined on the fixed endpoints homotopy class of $\gamma$.
\item If $\gamma$ is a loop, we obtain an automorphism $T^k_{\gamma,\pi}\colon H^k(X,\mathbf{Z})\to H^k(X,\mathbf{Z})$. In this case $T^k_{\gamma,\pi}$ is called a monodromy operator.
\item The $k$-th group of monodromy operators on $H^k(X,\mathbf{Z})$ induced by $\pi$ is
\[
\mathrm{Mon}^k(X)_{\mathbf{\pi}}:=\{T^k_{\gamma,\pi} \; | \; \gamma(0)=\gamma(1) \}
\]
\end{enumerate}
\end{defin}
With the definition above we can define the monodromy groups of a {\hk} manifold $X$.
\begin{defin}
The $k$-th monodromy group $\mathrm{Mon}^k(X)$ of a {\hk} manifold $X$ is defined as the subgroup of $\mathrm{Aut}_{\mathbf{Z}}(H^k(X,\mathbf{Z}))$ generated by the subgroups of the type  $\mathrm{Mon}^k(X)_{\mathbf{\pi}}$, where $\pi \colon \mathscr{X} \to S$ is a family of {\hk} manifolds over a reduced analytic base. The elements of $\mathrm{Mon}^k(X)$ are also called monodromy operators.
\end{defin}

We are interested in the group $\mathrm{Mon}^2(X)$. We recall that $\mathrm{Mon}^2(X)$ is contained in $\text{O}^+(H^2(X,\mathbf{Z}))$. We will mostly use three subgroups of $\mathrm{Mon}^2(X)$. The first is $\mathrm{Mon}^2_{\mathrm{Bir}}(X)$, defined as the elements of $\mathrm{Mon}^2(X)$ induced by the birational self maps of $X$. Indeed, any birational self-map induces a monodromy operator on $H^2(X,\mathbf{Z})$, by a result of Huybrechts (cf. \cite[Corollary 2.7]{Huybrechts03}). The second is $\mathrm{Mon}_{\mathrm{Hdg}}^2(X)$, namely the subgroup of the elements of $\mathrm{Mon}^2(X)$ which are also Hodge isometries. The last one is $W_{\mathrm{Exc}}$, that is, the group of isometries of $H^2(X,\mathbf{Z})$ generated by reflections by prime exceptional divisor classes. Indeed, Markman proved that any reflection $R_E$ associated with a prime exceptional divisor $E$ is an element of $\mathrm{Mon}_\mathrm{Hdg}^2(X)$ (\cite[Corollary 3.6, item (1)]{Markman13}).

Let $\mathrm{Def}(X)$ be the Kuranishi deformation space of a \hk\ manifold $X$ and let $\pi \colon \mathscr{X}\to \mathrm{Def}(X)$ be the universal family. Also, let $0$ be a distinguished point of $\mathrm{Def}(X)$ such that $\mathscr{X}_0 \cong X$. Set $\Lambda=H^2(X,\mathbf{Z})$. By the local Torelli Theorem, $\mathrm{Def}(X)$ embeds holomorphically into the period domain
\[
\Omega_{\Lambda}:=\{p \in \mathbf{P}(\Lambda \otimes_{\mathbf{Z}} \mathbf{C}) \mid q_X(p)=0,\quad \text{and}\quad q_X(p+\overline{p})>0\}
\]
as an analytic open subset, via the local period map
\[
\mathcal{P}\colon \mathrm{Def}(X)\to \Omega_{\Lambda}, \; t \mapsto \left[H^{2,0}(\mathscr{X}_t)\right].
\]
Let $L$ be a holomorphic line bundle on $X$. The Kuranishi deformation space of the couple $(X,L)$ is defined as $\mathrm{Def}(X,L):= \mathcal{P}^{-1}(c_1(L)^{\perp})$, where $c_1(L)^{\perp}$ is a hyperplane in $\mathbf{P}(\Lambda \otimes_{\mathbf{Z}} \mathbf{C})$. The space $\mathrm{Def}(X,L)$ is the part of $\mathrm{Def}(X)$ where $c_1(L)$ stays algebraic. Up to shrinking $\mathrm{Def}(X)$ around $0$, we can assume that $\mathrm{Def}(X)$ and $\mathrm{Def}(X,L)$ are contractible. Let $s$ be the flat section (with respect to the Gauss-Manin connection) of $R^2\pi_{*}\mathbf{Z}$ through $c_1(L)$ and $s_t \in H^{1,1}(\mathscr{X}_t,\mathbf{Z})$ its value at $t \in \mathrm{Def}(X,L)$.

\begin{defin}\label{stablyexc}
A line bundle $L$ on a {\hk} manifold $X$ is \textit{stably exceptional} if there exists a closed analytic subset $Z \subset \mathrm{Def}(X,L)$, of positive codimension, such that the linear system $|L_t|$ consists of a prime exceptional divisor for every $t \in \mathrm{Def}(X,L) \setminus Z$. An integral divisor $D$ is stably exceptional if $\mathscr{O}_X(D)$ is.
\end{defin}

We say that a prime divisor $E$ on a {\hk} manifold $X$ is exceptional if $q_X(E)<0$. Prime exceptional divisors are stably exceptional, by \cite[Proposition 5.2]{Markman10}. By making use of parallel transport operators, we can define \emph{stably exceptional classes}.

\begin{defin}\label{stablyexc1}
Let $X$ be a projective {\hk} manifold. A primitive, integral divisor class $\alpha \in N^1(X)$ (the Néron--Severi group of $X$) is stably exceptional if $q_X(\alpha,A)>0$, for some ample divisor $A$, and there exist a projective {\hk} manifold $X'$ and a parallel transport operator 
\[
f \colon H^2(X,\mathbf{Z})\to H^2(X',\mathbf{Z}),
\] 
such that $kf(\alpha)$ is represented by a prime exceptional divisor on $X'$, for some integer $k$.
\end{defin}

For example, a line bundle of self-intersection $-2$, intersecting positively any ample line bundle on a (smooth) projective K3 surface is stably exceptional, and its class in the Néron--Severi space is stably exceptional. Note that any stably exceptional class (line bundle) is effective, by the semicontinuity theorem.

The following lemma will play a crucial role in the proof of Theorem \autoref{thm:main2}.

\begin{prop}\label{prop:finmanylattices}
 Let $X$ be a projective hyper-Kähler manifold with $\rho(X)\geq 3$ and $\mathrm{Bir}(X)$ finite. Then there are finitely many lattices representing $\mathrm{Pic}(X)$.
\end{prop}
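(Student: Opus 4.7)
The plan is to combine Markman's identification of $\mathrm{Bir}(X)$ with a natural subgroup of $\mathrm{Mon}^2_{\mathrm{Hdg}}(X)$ together with Nikulin's classification of primitive embeddings of lattices, exploiting both the finiteness of $\mathrm{Bir}(X)$ and the rank assumption $\rho(X)\geq 3$.

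First, I would translate the hypothesis that $\mathrm{Bir}(X)$ is finite into a lattice-theoretic statement. By Markman's monodromy theorem, the natural map $\mathrm{Bir}(X)\to \mathrm{Mon}^2_{\mathrm{Hdg}}(X)$ has kernel and cokernel of bounded size, and postcomposing with the restriction $\mathrm{Mon}^2_{\mathrm{Hdg}}(X)\to \mathrm{O}(\mathrm{Pic}(X))$ gives a subgroup of Hodge--monodromy isometries preserving the birational K\"ahler cone. When $\mathrm{Bir}(X)$ is finite, the corresponding image in $\mathrm{O}(\mathrm{Pic}(X))$ is finite as well, i.e.\ up to a bounded error the Hodge--monodromy isometries acting faithfully on $\mathrm{Pic}(X)$ form a finite group.

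Second, I would apply Nikulin's classification of primitive embeddings \cite[Proposition~1.15.1]{Nikulin80}: primitive embeddings of $\mathrm{Pic}(X)$ into the even lattice $H^2(X,\mathbf{Z})$ are parameterized, up to the natural action of $\mathrm{O}(\mathrm{Pic}(X))\times \mathrm{O}(T_X)$ (where $T_X=\mathrm{Pic}(X)^\perp$ in $H^2(X,\mathbf{Z})$), by a finite set of admissible gluing isomorphisms between the discriminant groups. This already produces a finite list of primitive sublattices of $H^2(X,\mathbf{Z})$ abstractly isomorphic to $\mathrm{Pic}(X)$. The rank hypothesis $\rho(X)\geq 3$ places $\mathrm{Pic}(X)$ in signature $(1,\geq 2)$ and allows one to apply the argument of \cite[Lemma~3.3]{BMP25}, which exploits the existence of a hyperbolic plane in the orthogonal complement of two vectors, in the spirit of the lemma preceding this proposition.

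Combining the two steps, every lattice representative of $\mathrm{Pic}(X)$ arises from one of the finitely many orbits of primitive embeddings in step two, while the finiteness of the group from step one refines these embeddings into finitely many isomorphism classes of sublattices. The main obstacle is precisely the matching performed in this last step: one has to pass from $\mathrm{Mon}^2_{\mathrm{Hdg}}(X)$-orbits of primitive embeddings to isomorphism classes of representative lattices, which requires controlling the Hodge-theoretic restrictions on $\mathrm{O}(T_X)$. This is where the finiteness of $\mathrm{Bir}(X)$ intervenes decisively, through its interaction with the transcendental lattice $T_X$, and it is this interplay that is supplied by the cited Lemma~3.3 in \cite{BMP25}.
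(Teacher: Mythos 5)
There is a genuine gap: your argument classifies primitive embeddings of a \emph{fixed} lattice $\mathrm{Pic}(X)$ into $H^2(X,\mathbf{Z})$, whereas the proposition asks for finiteness of the set of abstract isomorphism classes of lattices that can \emph{occur} as $\mathrm{Pic}(X)$ under the hypotheses (this is how the statement is used later, to assume the Picard lattices of the special fibers $\mathscr{X}_{t_n}$ all lie in a fixed finite list). Nikulin's \cite[Proposition~1.15.1]{Nikulin80} takes the isomorphism class of $\mathrm{Pic}(X)$ as given and says nothing about which isomorphism classes are allowed, so your second step cannot deliver the desired finiteness. Your first step is also not correct as stated: the image of $\mathrm{Mon}^2_{\mathrm{Hdg}}(X)$ in $\mathrm{O}(\mathrm{Pic}(X))$ is \emph{not} finite when $\mathrm{Bir}(X)$ is finite, since it contains the (generically infinite) reflection group $W_{\mathrm{Exc}}$; only the image of $\mathrm{Bir}(X)$ itself is finite.

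The missing idea is the correct lattice-theoretic translation of the hypothesis. By \cite[Corollary~1.8]{Den22} (a consequence of Markman's description of $\mathrm{Mon}^2_{\mathrm{Hdg}}(X)$ as, up to finite index, $W_{\mathrm{Exc}}\rtimes \mathrm{Mon}^2_{\mathrm{Bir}}(X)$, combined with the cone conjecture), finiteness of $\mathrm{Bir}(X)$ forces $W_{\mathrm{Exc}}$ to have \emph{finite index} in $\mathrm{O}(\mathrm{Pic}(X))$, i.e.\ $\mathrm{Pic}(X)$ is a reflective hyperbolic lattice. Nikulin's finiteness theorem \cite[Theorem~5.2.1]{Nikulin82} then says that in rank at least $3$ (this is exactly where the hypothesis $\rho(X)\geq 3$ enters) there are only finitely many such lattices up to isomorphism \emph{and rescaling}. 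The remaining rescaling ambiguity still has to be controlled, and this is done by observing that $\mathrm{Pic}(X)$ represents at least one prime exceptional divisor class, whose BBF square is bounded from below by a constant depending only on the deformation type (\cite[Proposition~4.8]{KMPP19}); hence only finitely many rescalings can occur. None of these three steps appears in your proposal, and the final paragraph, which is where the finiteness would have to be extracted, does not supply an argument: the interaction with $T_X$ and the hyperbolic-plane lemma you invoke belong to a different part of the paper and play no role here.
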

\begin{proof}
 By \cite[Corollary 1.8]{Den22} the index of $W_{\mathrm{Exc}}$ in O$(\mathrm{Pic}(X))$ is finite. Then by \cite[Theorem 5.2.1]{Nikulin82}, up to a rescaling of $\mathrm{Pic}(X)$ as a lattice, there are finitely many lattices $\Lambda_0$ representing $\mathrm{Pic}(X)$. But by \cite[Proposition 4.8]{KMPP19}, once we fix the deformation type of $X$, we know that the prime exceptional divisors have square bounded from below. Thus, for any possible $\Lambda$, there are finitely many rescalings that could represent $\mathrm{Pic}(X)$, because $\Lambda_0$ represents at least one prime exceptional divisor. This concludes the proof.
\end{proof}

Let $Y$ be a projective variety. Recall that the pseudo-effective cone $\overline{\mathrm{Eff}}(Y)$ of $Y$ is the closure of the effective cone $\mathrm{Eff}(Y)$ (or, equivalently, of the big cone $\mathrm{Big}(Y)$) in the Néron--Severi space $N^1(Y)_{\mathbf{R}}$, which is defined as $N^1(Y)_{\mathbf{R}}:=N^1(Y) \otimes \mathbf{R}$, where $N^1(Y)$ is the Néron--Severi group of $Y$. We notice that the big cone is the interior of $\overline{\mathrm{Eff}}(Y)$. For an excellent account of cones in the Néron--Severi space on arbitrary complex projective varieties, we refer the reader to \cite{Lazarsfeld04}.

A line bundle on $Y$ is movable if $|mL|$ has no divisorial components in its base locus, for $m$ sufficiently large and divisible. A Cartier divisor $D$ is movable if $\mathscr{O}_Y(D)$ is. The movable cone of $Y$ is the cone $\mathrm{Mov}(Y)$ in $N^1(Y)_{\mathbf{R}}$ generated by the movable Cartier divisor classes. 

We will use the following characterization for the interior of the movable cone of a projective {\hk} manifold, for which we refer the reader to \cite[Remark 9]{HT09} and \cite[Subsection 2.2]{Den23}.
\begin{lem}\label{lem:charMov}
    Let $X$ be a projective \hk \ manifold. A class $h\in N^1(X)$ lies in the interior of the movable cone if and only if $q_X(h)>0$ and
    $
            q_X(h, E)>0
    $
     for every prime exceptional divisor $E$ on $X$.
\end{lem}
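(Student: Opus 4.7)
The plan is to deduce the lemma from the standard chamber description of the closed movable cone of a projective hyper-Kähler manifold, as recalled in the cited references \cite[Remark 9]{HT09} and \cite[Subsection 2.2]{Den23}. Let $\mathcal{C}_X \subset N^1(X)_{\mathbf{R}}$ denote the positive cone, i.e.\ the connected component of $\{v : q_X(v) > 0\}$ containing the ample cone. The key input I would invoke is the equality
\[
\overline{\mathrm{Mov}}(X) \;=\; \bigl\{ h \in \overline{\mathcal{C}_X} : q_X(h, E) \geq 0 \text{ for every prime exceptional divisor } E \text{ on } X \bigr\},
\]
together with the local finiteness of the family of walls $\{E^{\perp}\}_E$ inside $\mathcal{C}_X$, both of which are recorded in those references.

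For the implication ($\Leftarrow$), I would argue as follows. The strict positivity $q_X(h, E) > 0$ against any single prime exceptional class $E$ (which pairs strictly positively with every ample class, being effective) forces $h$ to lie in $\mathcal{C}_X$ rather than in $-\mathcal{C}_X$. All the defining inequalities of $\overline{\mathrm{Mov}}(X)$ cutting it out of $\overline{\mathcal{C}_X}$ are then strict at $h$, and by local finiteness of the wall arrangement they remain strict in a whole open neighborhood of $h$. Hence $h$ lies in the interior of $\mathrm{Mov}(X)$.

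For the implication ($\Rightarrow$), I would use that $\mathrm{Mov}(X) \subset \overline{\mathcal{C}_X}$, so interiority forces $h \in \mathcal{C}_X$, giving $q_X(h) > 0$. Each inequality $q_X(h, E) \geq 0$ is then automatic from the chamber description; if equality held for some prime exceptional $E_0$, the class $h$ would lie on the wall $E_0^{\perp}$ of $\overline{\mathrm{Mov}}(X)$, and a sufficiently small perturbation of $h$ in the direction of $-E_0$ would exit the movable cone, contradicting interiority. The only mildly delicate step is the orientation argument that distinguishes $\mathcal{C}_X$ from $-\mathcal{C}_X$ using only $q_X(h) > 0$, handled by the strict pairing positivity against a prime exceptional class; in the edge case when no prime exceptional divisor exists, $\overline{\mathrm{Mov}}(X)= \overline{\mathcal{C}_X}$ and the statement reduces to the standard identification of $\mathcal{C}_X$ with the interior of its closure, under the implicit orientation convention of the lemma.
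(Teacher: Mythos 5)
The paper itself does not prove this lemma: it only cites \cite[Remark 9]{HT09} and \cite[Subsection 2.2]{Den23}, and your plan of deducing the statement from the chamber description $\overline{\Mov}(X)=\bigl\{h\in\overline{\mathcal{C}_X} : q_X(h,E)\ge 0 \text{ for every prime exceptional } E\bigr\}$ together with local finiteness of the walls is exactly the intended route. Your forward implication is essentially fine, modulo one sign slip: since $q_X(E_0)<0$, the perturbation that leaves the cone is $h+\varepsilon E_0$, not $h-\varepsilon E_0$, because $q_X(h-\varepsilon E_0,E_0)=-\varepsilon\, q_X(E_0)>0$.

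The genuine gap is the orientation step in the implication ($\Leftarrow$). You claim that $q_X(h,E)>0$ against a single prime exceptional class forces $h\in\mathcal{C}_X$ because $E$ is effective and pairs positively with ample classes. This is false: a prime exceptional divisor has $q_X(E)<0$, so the hyperplane $E^{\perp}$ passes through the interior of \emph{both} components of $\{q_X>0\}$, and the half-space $\{q_X(\cdot,E)>0\}$ meets $-\mathcal{C}_X$ in a nonempty open cone. Concretely, take a K3 surface whose Picard lattice has rank two with Gram matrix $\left(\begin{smallmatrix}0&1\\1&-2\end{smallmatrix}\right)$, so that there is a unique prime exceptional divisor $C$ (a $(-2)$-curve): for $w$ in the interior of the reflected chamber $s_C(\Nef(X))$ one has $q_X(w)>0$ and $q_X(w,C)<0$, hence $h=-w$ satisfies $q_X(h)>0$ and $q_X(h,C)>0$, yet $h$ pairs negatively with a nef class and is not even pseudo-effective, let alone movable. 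So the two numerical conditions in the lemma do not pin down the component of the positive cone, and your attempted repair does not close this. The statement is only correct with the additional (implicit, and satisfied in every application in the paper) hypothesis that $h$ lies in the distinguished positive cone $\mathcal{C}_X$, e.g.\ that $q_X(h,A)>0$ for some ample $A$; with that hypothesis added, your argument goes through. The edge case you isolate (no prime exceptional divisors at all) is just the most visible instance of this same orientation problem, not a separate one.
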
 

Note that a class \(h\) lying in the interior of the movable cone satisfies $q_X(h,E) > 0$ for every stably exceptional divisor. Indeed, any stably exceptional divisor \(E\) is effective, and since \(q_X(E) < 0\), there exists a prime exceptional divisor \(E'\) such that $E = E' + D$,
where \(D\) is an effective divisor. It then follows from the lemma above that \(q_X(h,E') > 0\). Moreover, since \(h\) lies in the movable cone, we have \(q_X(h,D) \geq 0\). Therefore,
\[
q_X(h,E) = q_X(h,E') + q_X(h,D) > 0,
\]
as claimed.

If $X$ belongs to one of the known deformation types (i.e.,\ $\operatorname{K3}^{[n]}$, $\operatorname{Kum}^n$, OG6, or OG10), then we can give an abstract description of the (interior of the) movable cone. First, let us denote by $\eta$ the deformation class, and let $\Lambda_{\eta}$ be the abstract lattice such that $\Lambda_{\eta} \cong H^2(X,\mathbf{Z})$ for any $X$ of class $\eta$.

By the work of Markman~\cite{Markman13} (see also~\cite{BM:MMP}) in the $\operatorname{K3}^{[n]}$ case, Yoshioka~\cite{Yoshioka:KumBridg} in the $\operatorname{Kum}^n$ case, Mongardi--Rapagnetta~\cite{MR:OG6} in the OG6 case, and Mongardi--Onorati~\cite{MO:OG10} in the OG10 case, there exists a subset $\mathcal{S}_\eta \subset \Lambda_{\eta}$ with the property that, for any marked pair $(X,\theta)$ of class $\eta$, the set of stably exceptional classes is
\[
\mathcal{S}_X :=
\left\{
x \in \theta^{-1}(\mathcal{S}_\eta) \cap N^1(X)
\;\middle|\;
q_X(x,\kappa) > 0 \text{ for some K\"ahler class } \kappa.
\right\}
\]

    The condition $q_X(x,\kappa) > 0$ ensures that such classes lie in the effective cone of $X$.

    \begin{rem}\label{rmk:S tau}
    If $\eta$ is the deformation class $\operatorname{K3}^{[n]}$ with $n-1$ a prime power, or $\operatorname{Kum}^n$ with $n+1$ a prime power, or OG6, or OG10, then $\mathcal{S}_\eta$ is intrinsically determined in $\Lambda_{\eta}$. In this case, it is also uniquely determined, up to an isometry of $\Lambda_{\eta}$. In the other cases, the set $\mathcal{S}_\eta$ is determined once a primitive embedding of $\Lambda_{\eta}$ inside the Mukai lattice is chosen. Since there are only finitely many such primitive embeddings (up to isometries of the Mukai lattice), we have finitely many choices for $\mathcal{S}_\eta$, again up to an isometry of $\Lambda_{\eta}$.
    \end{rem}

    We can use this numerical characterization of stably exceptional divisors to prove that a given Néron--Severi lattice supports at most finitely many movable cones. Let us make this precise by setting the following definition.

    \begin{defin}
    Let $\Lambda_0$ be a lattice of signature $(1,n)$.
    \begin{enumerate}
    \item A subset $F\subset\Lambda_0 \otimes \R$ is an \emph{abstract $\eta$-movable cone} if there exist a primitive embedding $\iota \colon\Lambda_0\to\Lambda_{\eta}$, a projective {\hk} manifold $X$, and a marking $\theta \colon H^2(X,\Z) \to \Lambda_{\eta}$, such that $N^1(X)_{\R}=\theta_{\R}^{-1}(
\iota(\Lambda_0\otimes \R))$ and $\mathrm{int}(\mathrm{Mov}(X))=\theta_{\R}^{-1}(\iota(F))$, where $\theta_{\R}$ is the $\R$-linear extension of $\theta$.
    \item Two abstract $\eta$-movable cones $F\subset \Lambda_0\otimes\R$ and $F'\subset \Lambda_0'\otimes\R$ are \emph{isomorphic} if there exists an isomorphism of primitive embeddings
     \[
      \xymatrix{
      \Lambda_0\ar@{->}[r]^-{\iota}\ar@{->}[d]^-{\bar{g}} & \Lambda_{\eta}\ar@{->}[d]^-{g} \\
      \Lambda_0'\ar@{->}[r]^-{\iota'} & \Lambda_{\eta}
      }
    \]
    such that the $\R$-linear extension of $g$ sends $F$ to $F'$.
    \end{enumerate}
    \end{defin}

    \begin{lem}\label{lem:finmanymovcones}
    Let $\eta$ be one of the known deformation classes of {\hk} manifolds. If $\Lambda_0$ is a lattice of signature $(1,n)$, then there are finitely many, up to isomorphism, abstract $\eta$-movable cones in $\Lambda_0\otimes \R$.
    \end{lem}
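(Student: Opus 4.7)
The plan is to parametrize each abstract $\eta$-movable cone by a pair $(\iota,F)$, with $\iota\colon \Lambda_0\hookrightarrow \Lambda_\eta$ a primitive embedding and $F\subset \Lambda_0\otimes\R$, and to observe that the isomorphism relation of the definition coincides with the natural orbit relation of $O(\Lambda_\eta)\times O(\Lambda_0)$ acting on these pairs via $(g,\bar g)\cdot(\iota,F) = (g\,\iota\,\bar g^{-1},\bar g(F))$. I would then establish finiteness in two stages: first, finiteness of the orbits of $\iota$ under $O(\Lambda_\eta)$; second, for each fixed representative $\iota$, finiteness of the admissible cones $F$ modulo the stabilizer of $\iota(\Lambda_0)$ in $O(\Lambda_\eta)$.

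For the first stage, since $\Lambda_\eta$ is an even lattice of fixed signature and $\Lambda_0$ is necessarily even (otherwise no primitive embedding exists and the lemma is vacuous), Nikulin's classification of primitive embeddings of an even lattice into an even lattice (\cite[Proposition 1.15.1]{Nikulin80}) parametrizes these orbits by finite discriminant-form data; in particular, their number is finite.

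For the second stage, fix a representative $\iota$ and suppose $F$ is an abstract $\eta$-movable cone realized by a marked projective \hk\ manifold $(X,\theta)$. By Lemma~\ref{lem:charMov} and the observation following it, $F$ is a chamber in the positive cone of $\Lambda_0\otimes\R$ cut out by the hyperplanes orthogonal to the classes of $\mathcal{S}_\eta\cap \iota(\Lambda_0)$ that pair positively with a K\"ahler class (read via $\iota$). By the theorems of \cite{Markman13} in the $\mathrm{K3}^{[n]}$ case and of \cite{Yoshioka:KumBridg}, \cite{MR:OG6}, \cite{MO:OG10} in the remaining deformation classes, the group $W_{\mathrm{Exc}}$ generated by the reflections in stably exceptional classes acts simply transitively on these chambers. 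By \cite[Corollary 3.6]{Markman13}, each such reflection is a genuine monodromy operator of the ambient second cohomology, and hence an isometry of $\Lambda_\eta$; since its reflecting vector lies in $\iota(\Lambda_0)$, it preserves $\iota(\Lambda_0)$ setwise and fixes $\iota(\Lambda_0)^\perp$ pointwise. Consequently $W_{\mathrm{Exc}}$ embeds into the stabilizer of $\iota(\Lambda_0)$ in $O(\Lambda_\eta)$, so that all admissible cones $F$ attached to this $\iota$ lie in a single orbit of the stabilizer. Combined with the first stage, this yields the desired finiteness.

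The principal obstacle is justifying that the reflections $R_E$ in stably exceptional classes, which are plainly isometries of $\iota(\Lambda_0)$, actually extend to isometries of the larger lattice $\Lambda_\eta$; for a generic integer vector in a primitive sublattice this need not hold. This extendability is precisely what the cited theorems of Markman and its analogues in the other deformation classes supply, by identifying each $R_E$ with a global monodromy operator on $H^2(X,\Z)$. A minor additional point, per Remark~\ref{rmk:S tau}, is that in some deformation classes $\mathcal{S}_\eta$ is determined only up to finitely many choices arising from primitive embeddings of $\Lambda_\eta$ into the Mukai lattice, but this contributes at most a harmless finite multiplicative factor to the count.
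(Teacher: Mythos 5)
Your proof is correct and follows essentially the same strategy as the paper's: reduce to finitely many choices of the set $\mathcal{S}_\eta$ and finitely many primitive embeddings $\iota\colon\Lambda_0\hookrightarrow\Lambda_\eta$ up to $\operatorname{O}(\Lambda_\eta)$, then realize every admissible cone as a chamber of the wall-and-chamber decomposition cut out by the stably exceptional classes in $\iota(\Lambda_0)$. The one step you spell out more explicitly than the paper — that $W_{\mathrm{Exc}}$ acts transitively on these chambers and that its reflections, being monodromy operators, extend to isometries of $\Lambda_\eta$ preserving $\iota(\Lambda_0)$ — is precisely the content the paper compresses into its final sentence about acting with $\operatorname{O}(\Lambda_\eta)$, so there is no substantive difference in the argument.
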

    \begin{proof}
    First of all, once $\eta$ is fixed, we have at most a finite number of choices for the set $\mathcal{S}_\eta$ of numerical stably exceptional classes (see Remark~\ref{rmk:S tau}). Moreover, by \cite[Remark 1.1]{Huyb18}, there are finitely many primitive embeddings of $\Lambda_0$ inside $\Lambda_{\eta}$, up to $\operatorname{O}(\Lambda_{\eta})$. 

    Let us then fix a primitive embedding $\iota\colon\Lambda_0\to\Lambda_{\eta}$ and a set $\mathcal{S}_\eta$ of numerical stably exceptional classes. 

    Let us choose a Hodge structure on $\Lambda_{\eta}$ such that $\Lambda_0 \otimes \R\subset (\Lambda_{\eta} \otimes\R)^{{1,1}}$. The cone of positive classes inside $(\Lambda_{\eta} \otimes\R)^{{1,1}}$ has two connected components; let us choose one of them and denote it by $\mathcal{C}_0$. Then there is a well-defined set $\mathcal{S}_0=\left\{x\in \mathcal{S}_\eta\cap (\Lambda_0\otimes \R) \mid (x,\alpha)>0 \text{ for some } \alpha \in\mathcal{C}_0\right\}$, which is independent on the Hodge structure chosen. Consider the wall-and-chamber decomposition 
    \begin{equation}\label{eqn:dec of C_0 into abstract mov} 
    \mathcal{C}_0 \setminus\bigcup_{x\in\mathcal{S}_0} x^\perp. 
    \end{equation}
    If $\mathcal{M}$ is a chamber, then by Lemma~\ref{lem:charMov} the subset $\mathcal{M}\cap(\Lambda_0 \otimes \R)$ is an abstract $\eta$-movable cone. 

    Finally, by acting with $\operatorname{O}(\Lambda_{\eta})$ (more precisely, with the $\R$-linear extensions of its elements), we obtain isomorphic abstract $\eta$-movable chambers, thus concluding the proof.
    \end{proof}

\begin{rem}
    


    If $\eta$ is one of the known deformation types, then there exists a finite-index subgroup
\[
M_\eta \subset \operatorname{O}(\Lambda_{\eta})
\]
with the property that, for every marked pair $(X,\theta)$ of class $\eta$,
\[
\Mon^2(X) = \theta^{-1} \circ M_\eta \circ \theta .
\]
This is established in \cite{Markman:Mon} for manifolds of type $\operatorname{K3}^{[n]}$; 
in \cite{Markman:MonJac,Mongardi:Mon} for manifolds of type $\operatorname{Kum}^n$; 
in \cite{MR:OG6} for manifolds of type OG6; 
and in \cite{Ono:Mon} for manifolds of type OG10.

Let $\operatorname{O}(\Lambda_{\eta}, \Lambda_0)$ denote the subgroup of isometries preserving $\Lambda_0$, and set
\[
M_0 = M_\eta \cap \operatorname{O}(\Lambda_{\eta}, \Lambda_0).
\]
At the end of the proof above, we can distinguish two cases. Since stably exceptional divisors are preserved by parallel transport operators, the action of $M_0$ does not change the set $\mathcal{S}_0$; hence the decomposition~(\ref{eqn:dec of C_0 into abstract mov}) remains invariant.

Acting on~(\ref{eqn:dec of C_0 into abstract mov}) with an isometry
\[
g \in \operatorname{O}(\Lambda_{\eta}, \Lambda_0) \setminus M_0
\]
produces a different (but isomorphic) decomposition. To make this explicit, first suppose that $\eta$ is a known deformation type for which
\[
M_\eta = \operatorname{O}^+(\Lambda_{\eta}),
\]
the group of orientation-preserving isometries. This holds when $\eta$ is of type $\operatorname{K3}^{[n]}$ with $n-1$ a prime power, or of type OG6 or OG10. In this case, the set $\mathcal{S}_\eta$ is numerically determined by the degree and divisibility of its elements, and the claim is clear.

In the remaining cases, the set $\mathcal{S}_\eta$ is determined by the degree and divisibility of its elements together with an additional numerical invariant depending on the choice of a primitive embedding of $\Lambda_{\eta}$ into the Mukai lattice $\widetilde{\Lambda}$ (see Remark~\ref{rmk:S tau}). The stabilizer of a primitive embedding $\iota \colon \Lambda_{\eta} \to \widetilde{\Lambda}$ (up to $\operatorname{O}(\widetilde{\Lambda})$) is the subgroup $\mathcal{W} \subset \operatorname{O}(\Lambda_{\eta})$ of isometries acting as $\pm \id$ on the discriminant group. If $\eta$ is of type $\operatorname{K3}^{[n]}$, then $M_\eta = \mathcal{W}$; if $\eta$ is of type $\operatorname{Kum}^n$, then $M_\eta \subset \mathcal{W}$ has index $2$ (cf.\ also \cite[Corollary~4.4]{Wieneck18}). Acting with
\[
g \in \operatorname{O}(\Lambda_{\eta}, \Lambda_0) \setminus M_0
\]
has the effect of changing the primitive embedding of $\Lambda_{\eta}$ into $\widetilde{\Lambda}$. This in turn modifies the set of numerically stably exceptional classes accordingly, that is,
\[
\xymatrix{
\mathcal{S}_\eta \subset \Lambda_{\eta} \ar[r]^-{\iota} \ar[d]^-{g} 
  & \widetilde{\Lambda} \ar[d]^-{\id} \\
g(\mathcal{S}_\eta) \subset \Lambda_{\eta} \ar[r]^-{\iota'} 
  & \widetilde{\Lambda}
}
\]
where $\iota' = \iota \circ g^{-1}$. Therefore, acting with
$g \in \operatorname{O}(\Lambda_{\eta}, \Lambda_0) \setminus M_0$
on~(\ref{eqn:dec of C_0 into abstract mov}) yields a decomposition into abstract $\eta$-movable cones that is, by construction, isomorphic to the original one.

If we do not require $\Lambda_0$ to be preserved, then we are simply changing $\Lambda_0$ within its $\operatorname{O}(\Lambda_{\eta})$-orbit of primitive embeddings in $\Lambda_{\eta}$, and we may therefore ignore this case without loss of generality.

\end{rem}

\section{Birational automorphism group of the very general fiber}
Let $\varphi\colon\mathscr{X}\to \Delta$ be a nontrivial family of \hk\ manifolds over the unit disk, with the Euclidean topology. Moreover, let $\tau \colon R^2\varphi_*\Z_\mathscr{X}\to \Lambda\times \Delta$ be a marking, where $\Lambda$ is the lattice given by the second integral cohomology group $H^2(\mathscr{X}_0, \Z)$, endowed with the Beauville--Bogomolov--Fujiki form.

Set $\Lambda_t \coloneqq \tau_t(N^1(\mathscr{X}_t))$. By \cite[Section 2]{Oguiso03}, there exists a primitive lattice $\Lambda_0\subset \Lambda$ and an uncountable dense subset $\cG\subset \Delta$ such that:
\begin{enumerate}
    \item $\cS\coloneqq \Delta\setminus \cG$ is countable and dense;
    \item $\Lambda_0\subset \Lambda_t$ for all $t\in \Delta$;
    \item $\Lambda_0 = \Lambda_t$ for all $t\in \cG$;
    \item $\Lambda_0\subsetneq \Lambda_t$ for all $t\in \cS$.
\end{enumerate}
We will say that the points in $\cG$ are \emph{very general}, while those in $\cS$ are \emph{special}. Note that the Picard number $\rho(\mathscr{X}_t)$ determines whether the point $t$ is very general or special.

For any fiber $\mathscr{X}_t$, we have a representation 
$$
\rho_t \colon \Bir(\mathscr{X}_t)\lra \text{O}^+(H^2(\mathscr{X}_t,\Z)).
$$
We denote by $\Mon^2_{\Bir}(\mathscr{X}_t)$ the image of $\rho_t$. Recall that these morphisms have finite kernel, for example by \cite[Proposition 9.1]{Huybrechts:Basic}.

Suppose that the family $\varphi \colon \mathscr{X}\to \Delta$ is projective, with a $\varphi$-ample line bundle $\cL$ on $\mathscr{X}$. Denote by $\ell$ the class $\tau_t([\cL_{\mathscr{X}_t}])$, which in independent of $t$. In particular, $\ell$ lies in $\Lambda_t$ for all $t$, hence we have $\ell\in \Lambda_0$. 


We set
\begin{equation}\label{eqn:defMov}
    M_t \coloneqq \tau_t(\Int(\Mov(\mathscr{X}_t)))\subset \Lambda_t, \quad\text{ and } M_t^0 \coloneqq M_t\cap \Lambda_0,
\end{equation}
for all $t\in\Delta$, where $\Mov(\mathscr{X}_t)$ is the movable cone of $\mathscr{X}_t$. 
Since $\cL$ is $\varphi$-ample, the class $\ell$ lies in $M_t^0$ for all $t\in \Delta$.

    \begin{lem}
        For any $t\in \cG$, the cone $M\coloneqq M_t\subset \Lambda_0$ is independent of $t$. Moreover, there is an inclusion
        $$
            M_t^0\subset M\quad \text{for all } t\in \Delta.
        $$
    \end{lem}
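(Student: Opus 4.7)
My plan is to identify $M_t$ for $t\in\cG$ and, more generally, $M_t^0$ for arbitrary $t$, as chambers in a single wall-and-chamber decomposition of the positive cone of $\Lambda_0\otimes\R$ driven by stably exceptional classes. The main tools are Lemma~\ref{lem:charMov}, the strengthening in the note immediately following it (interior of $\Mov$ pairs strictly positively with every stably exceptional divisor), the fact that the $\varphi$-ample line bundle $\cL$ makes $\ell$ a K\"ahler class on every fiber, and the contractibility of $\Delta$, which trivializes $R^2\varphi_*\Z$ and thereby makes ``stably exceptional'' a fiberwise-uniform notion.

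First I would set
\[
\mathcal{P} \coloneqq \bigl\{\alpha\in\Lambda_0\;\bigm|\;\alpha\text{ primitive, stably exceptional, and } q(\alpha,\ell)>0\bigr\}.
\]
Because parallel transport along $\Delta$ is canonical and preserves stably exceptional classes, $\mathcal{P}$ coincides with the set of primitive stably exceptional classes in $\Lambda_0$ on every fiber $\mathscr{X}_t$. For $t\in\cG$ we have $\Lambda_t=\Lambda_0$, so Lemma~\ref{lem:charMov} combined with the note following it identifies $M_t$ with the connected component containing $\ell$ of
\[
\bigl\{h\in\Lambda_0\otimes\R\;\bigm|\;q(h)>0,\; q(h,\alpha)>0 \text{ for all }\alpha\in\mathcal{P}\bigr\}.
\]
Since this description depends only on $\Lambda_0$, $\mathcal{P}$, and $\ell$, the resulting cone $M$ is independent of $t\in\cG$.

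For the inclusion $M_t^0\subset M$, fix an arbitrary $t\in\Delta$ and take $h\in M_t^0 = M_t\cap\Lambda_0$. Then $h$ sits in $\Lambda_0\otimes\R$ and lies in the interior of $\Mov(\mathscr{X}_t)$, so by the note following Lemma~\ref{lem:charMov} it satisfies $q(h)>0$ and $q(h,\alpha)>0$ for every stably exceptional class $\alpha$ on $\mathscr{X}_t$. In particular $q(h,\alpha)>0$ for every $\alpha\in\mathcal{P}\subset\Lambda_0\subset\Lambda_t$, and since $h$ lies in the same positive component as $\ell$ (both belong to $M_t$), the characterization of $M$ above forces $h\in M$.

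The main subtlety to monitor is the claim that $\mathcal{P}$, defined purely in terms of $\Lambda_0$, really coincides with the restriction to $\Lambda_0$ of the stably exceptional classes of each fiber $\mathscr{X}_t$. This reduces to the two observations already used: stably exceptional classes are preserved under the trivial parallel transport of $R^2\varphi_*\Z$ over the contractible base $\Delta$, and the $\varphi$-ampleness of $\cL$ provides a uniform K\"ahler witness on every fiber, so the positivity condition $q(\alpha,\ell)>0$ genuinely detects effectivity fiber by fiber.
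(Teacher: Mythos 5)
Your argument is correct and rests on the same two ingredients as the paper's own proof: parallel transport over the contractible base makes ``primitive stably exceptional class in $\Lambda_0$ pairing positively with $\ell$'' a fiber-independent notion, and the remark following Lemma~\ref{lem:charMov} shows that the interior of the movable cone pairs strictly positively with every such class. The paper packages this as a pointwise contradiction (transport a prime exceptional divisor from the very general fiber $\mathscr{X}_p$ to $\mathscr{X}_t$, where it stays stably exceptional) rather than as a global wall-and-chamber description of $M$ inside $\Lambda_0\otimes\R$, but the content is essentially identical.
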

    \begin{proof}
        We fix a point $p \in \cG$. It suffices to show that, for any $t \in \Delta$, one has
\[
    M_t^0 \subset M_p.
\]
Indeed, for $t \in \cG$, this implies $M_t = M_t^0 \subset M_p$; exchanging the roles of $t$ and $p$ then yields the reverse inclusion, and hence equality.

Fix a class $h \in M_t^0$. Suppose, for the sake of contradiction, that $h$ is not movable on $\mathscr{X}_p$. Then, by Lemma~\ref{lem:charMov}, there exists a prime exceptional divisor $D$ on $\mathscr{X}_p$ such that
\[
    q_{\mathscr{X}_p}\big(\tau_p([D]), h\big) \le 0.
\]
Set $\alpha = \tau_p([D])$. 

Consider now the class $\tau_t^{-1}(\alpha) \in H^2(\mathscr{X}_t, \Z)$. This class is stably exceptional: indeed, $[D]$ is effective on $\mathscr{X}_p$ by definition, so $q_X(\alpha, \ell) > 0$ since $\ell$ is ample on $\mathscr{X}_p$. Moreover, the deformation $\varphi$ induces a parallel transport operator
\[
    T_\varphi^{t,p} \colon H^2(\mathscr{X}_t, \Z) \longrightarrow H^2(\mathscr{X}_p, \Z)
\]
such that $T_\varphi^{t,p}\big(\tau_t^{-1}(\alpha)\big) = \tau_p^{-1}(\alpha) = [E]$, where $E$ is the class of a prime exceptional divisor on $\mathscr{X}_p$.

Since $h$ is movable on $\mathscr{X}_t$, Lemma~\ref{lem:charMov} gives $q_{\mathscr{X}_t}(\alpha, h) > 0$, contradicting the previous inequality. This proves the claim.
\end{proof}

Let $\mathscr{X}_t$ be any fiber of the family $\varphi$. Recall that the subgroup $\Mon^2(\mathscr{X}_t)$ of monodromy operators in $\mathrm{O}\big(H^2(\mathscr{X}_t, \Z)\big)$ is a deformation invariant. Thus, via the marking $\tau$, we may identify it with a fixed subgroup
\[
    \Mon^2(\Lambda) \subset \mathrm{O}(\Lambda).
\]

Let $\widetilde{\mathrm{O}}(\Lambda_0)$ denote the subgroup of isometries of $\Lambda_0$ acting trivially on its discriminant group $D_{\Lambda_0}$.
Since $\Lambda_0$ is a primitive sublattice of $\Lambda$, any isometry
$\widetilde{g}_0 \in \widetilde{\mathrm{O}}(\Lambda_0)$ extends uniquely to an isometry
$g \in \mathrm{O}(\Lambda)$ such that
\[
    g|_{(\Lambda_0)^{\perp}} = \mathrm{id}
\]
(see, for instance, \cite[Proposition~2.6]{Huyb16}).
This defines an embedding
\[
    j \colon \widetilde{\mathrm{O}}(\Lambda_0) \hookrightarrow \mathrm{O}(\Lambda),
\]
sending $\widetilde{g}_0$ to its extension $g$.
We then set
\begin{equation}\label{eq:defMonLambda0}
    \Mon_{\Lambda_0}
    \coloneqq
    \Mon^2(\Lambda) \cap j\big(\widetilde{\mathrm{O}}(\Lambda_0)\big)
    \subset \Mon^2(\Lambda).
\end{equation}

For any $t \in \Delta$, every $g \in \Mon_{\Lambda_0}$ satisfies
$\tau_t^*(g) \in \Mon^2_{\Hdg}(\mathscr{X}_t)$.
Indeed, since $\tau_t(\Lambda_0) \subset N^1(\mathscr{X}_t)$, the transcendental lattice of $\mathscr{X}_t$
is contained in $\tau_t(\Lambda_0)^{\perp}$; hence $\tau_t^*(g)$ fixes the symplectic form of $\mathscr{X}_t$,
and thus lies in $\Mon^2_{\Hdg}(\mathscr{X}_t)$.

For each $t$, we define
\begin{equation}\label{eq:defGt}
    G_t
    \coloneqq
    \bigl\{\, g \in \Mon_{\Lambda_0} \;\big|\; g(\ell) \in M_t \,\bigr\}.
\end{equation}
Since $M_t = M$ for all $t \in \cG$, the group
\[
    G \coloneqq G_t
\]
is independent of the choice of $t \in \cG$.

\begin{lem}\label{lem:groupG_t}
    With the notation above, the marking $\tau_t^*$ induces an inclusion $G_t \hookrightarrow \Mon^2_{\Bir}(\mathscr{X}_t)$ with image $\Mon^2_{\Bir}(\mathscr{X}_t)\cap \tau_{t}^*(\Mon_{\Lambda_0})$.
    Moreover, for any $t\in \cG$ the group $G = G_t$ is finitely generated and there is a constant $N\coloneqq N(\Lambda_0)$ such that
    $$
    [\Mon^2_{\Bir}(\mathscr{X}_t) : \tau^*_t(G)] \le N.
    $$
\end{lem}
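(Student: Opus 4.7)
The plan is to handle the three assertions in turn.

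For the image identification, the idea is to apply Markman's Hodge-theoretic Torelli-type criterion for birational automorphisms of hyper-K\"ahler manifolds. Given $g\in G_t \subset \Mon_{\Lambda_0}$, the marking transports $g$ to a monodromy operator $\tau_t^*(g)$ on $H^2(\mathscr{X}_t,\Z)$: it is a Hodge isometry because $g$ acts trivially on $\Lambda_0^\perp\supset \tau_t^{-1}(T_t)$, and it sends $\tau_t(\ell)$ into $\Int(\Mov(\mathscr{X}_t))$ by the defining condition $g(\ell)\in M_t$. Markman's criterion then produces a birational automorphism inducing $\tau_t^*(g)$, so $\tau_t^*$ restricts to an injective map $G_t\to \Mon^2_{\Bir}(\mathscr{X}_t)\cap\tau_t^*(\Mon_{\Lambda_0})$. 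For the reverse inclusion of the image, if $f\in \Mon^2_{\Bir}(\mathscr{X}_t)\cap\tau_t^*(\Mon_{\Lambda_0})$ is induced by some $\phi\in\Bir(\mathscr{X}_t)$, then the standard fact that $\phi^*$ preserves $\Int(\Mov(\mathscr{X}_t))$ says exactly that the preimage of $f$ in $\Mon_{\Lambda_0}$ satisfies the defining condition of $G_t$.

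For the index bound at $t\in \cG$, the plan is to introduce the restriction homomorphism
$$
r \colon \Mon^2_{\Bir}(\mathscr{X}_t) \lra \mathrm{O}(T_t)^{\Hdg} \times \mathrm{O}(D_{\Lambda_0})
$$
and prove that $\ker(r) = \tau_t^*(G)$. The inclusion $\tau_t^*(G)\subseteq \ker(r)$ is built into the definition of $\Mon_{\Lambda_0}$. Conversely, any $f\in \ker(r)$ restricts to an isometry of $N^1(\mathscr{X}_t)=\tau_t(\Lambda_0)$ acting trivially on $D_{\Lambda_0}$, and its extension by the identity on $\Lambda_0^\perp$ is unique with this property and lies in $\Mon^2(\Lambda)$ by deformation invariance of the second monodromy group, hence in $\Mon_{\Lambda_0}$; the condition on $\ell$ then follows from the first part. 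Granting this, one obtains
$$
[\Mon^2_{\Bir}(\mathscr{X}_t) : \tau_t^*(G)] \le |\mathrm{O}(T_t)^{\Hdg}|\cdot |\mathrm{O}(D_{\Lambda_0})|.
$$
The factor $|\mathrm{O}(D_{\Lambda_0})|$ depends only on $\Lambda_0$, and at $t\in \cG$ the transcendental lattice $T_t$ is isometric to the fixed $\Lambda_0^\perp\subset \Lambda$ and carries a K3-type Hodge structure with vanishing rational $(1,1)$-part, so a classical result going back to Oguiso bounds $|\mathrm{O}(T_t)^{\Hdg}|$ by a finite cyclic order depending only on $\mathrm{rk}(\Lambda_0^\perp)$; one can take $N$ to be the product of these two uniform bounds.

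Finite generation of $G\cong \tau_t^*(G)$ then follows from the index bound together with the finite generation of $\Mon^2_{\Bir}(\mathscr{X}_t)$, which is itself a consequence of the Morrison--Kawamata cone conjecture for birational automorphisms of projective hyper-K\"ahler manifolds (providing a rational polyhedral fundamental domain for the action on $\Int(\Mov(\mathscr{X}_t))$). The main technical obstacle will be the identification $\ker(r)=\tau_t^*(G)$: one has to carefully track how a lift across the orthogonal decomposition $\Lambda_0 \oplus \Lambda_0^\perp$ interacts with the monodromy subgroup $\Mon^2(\Lambda)$, and verify that the movable-cone condition on $\ell$ is truly preserved under this lifting.
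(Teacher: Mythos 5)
Your argument is correct and, for the first assertion, is essentially the paper's: both reduce to the characterization of $\Mon^2_{\Bir}(\mathscr{X}_t)$ as the Hodge monodromy operators sending the ample class $\ell$ into $\Int(\Mov(\mathscr{X}_t))$, via Markman's Torelli-type results, and the image identification then falls out of the definitions of $\Mon_{\Lambda_0}$ and $M_t$. For the second assertion you take a mildly different but equivalent route: you package the two finite-index steps into a single homomorphism $r\colon \Mon^2_{\Bir}(\mathscr{X}_t)\to \mathrm{O}(T_t)^{\Hdg}\times \mathrm{O}(D_{\Lambda_0})$ with kernel $\tau_t^*(G)$, bound the index explicitly by $|\mathrm{O}(D_{\Lambda_0})|$ times the Zarhin--Oguiso bound on the cyclic group of Hodge isometries of the (irreducible) transcendental Hodge structure, and then deduce finite generation of $G$ from finite generation of $\Mon^2_{\Bir}(\mathscr{X}_t)$ (via the cone conjecture) together with finiteness of the index. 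The paper instead quotes Boissi\`ere--Sarti for the finite generation of the subgroup $\Gamma_{T(\mathscr{X}_t),\Bir}$ acting trivially on the transcendental lattice and for its finite index in $\Mon^2_{\Bir}(\mathscr{X}_t)$, and realizes $G$ as the kernel of the discriminant representation on that subgroup; your $r$ is just the composite of those two maps, so the underlying mechanism is the same. What your version buys is an explicit, self-contained value of $N$ depending only on $D_{\Lambda_0}$ and $\rank(\Lambda_0^\perp)$; what the paper's version buys is avoiding any appeal to the cone conjecture at this stage. The one point to make sure you justify is the irreducibility of the rational transcendental Hodge structure of a projective hyper-K\"ahler manifold, which is what makes the representation on $H^{2,0}$ faithful and hence $\mathrm{O}(T_t)^{\Hdg}$ finite cyclic.
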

\begin{proof}
We start by claiming that \begin{equation}\label{eqn:def2MonBir}
    \Mon^2_{\Bir}(\mathscr{X}_t)
    =
    \left\{
        g \in \Mon^2_{\Hdg}(\mathscr{X}_t)
        \,\middle|\,
        g(\ell) \in \Int\big(\Mov(\mathscr{X}_t)\big)
    \right\}.
\end{equation}
The inclusion $\subseteq$ is trivial, so we prove $\supseteq$. We call a \emph{wall} of the nef cone of a projective {\hk} manifold a top-dimensional extremal face. According to \cite[Definition~1.2, Lemma~1.4]{Mongardi15}, monodromy operators in $\Mon^2_{\text{Hdg}}(\mathscr{X}_t)$ send walls of $\mathrm{Nef}(\mathscr{X}_t)$ to walls of the nef cone of a birational model of $\mathscr{X}_t$, and vice versa. Since $\ell$ is ample, it follows that there exists a birational model of $X$ on which the class $g(\ell)$ becomes ample (possibly non-isomorphic to $\mathscr{X}_t$).

Indeed, since $g(\ell)$ lies in the interior of the movable cone, it can fail to be ample only if it lies on a wall of the nef cone of some birational model of $\mathscr{X}_t$. However, by acting with $g^{-1}$ and using the $\Mon^2_{\Hdg}$-invariance of walls, we would then obtain that $\ell$ lies on a wall of the nef cone, which is a contradiction.

To conclude, arguing as in \cite[Proof of Corollary~5.7]{Markman11}, we deduce that any $g$ as in~\eqref{eqn:def2MonBir} is induced by a birational automorphism, that is, $g \in \Mon^2_{\text{Bir}}(X)$. 

Now, since $\tau_t^*(\Mon_{\Lambda_0}) \subset \Mon^2_{\Hdg}(\mathscr{X}_t)$ and
$\tau_t\big(\Int(\Mov(\mathscr{X}_t))\big) = M_t$, it follows that
\[
    \tau_t^* G_t = \Mon^2_{\Bir}(\mathscr{X}_t) \cap \Mon_{\Lambda_0}.
\]

Let now $t$ be any point in $\cG$.  
By \cite[Proof of Theorem~2]{BS12}, the group
\[
    \Gamma_{T(\mathscr{X}_t), \Bir}
    \coloneqq
    \left\{
        g \in \Mon^2_{\Bir}(\mathscr{X}_t)
        \,\middle|\,
        g_{|T(\mathscr{X}_t)} = \id
    \right\}
\]
is finitely generated (note that our notation differs slightly from that used in \cite[Proof of Theorem~2]{BS12}). Moreover, for every $t \in \cG$ we have a natural group homomorphism
\[
    r \colon \Gamma_{T(\mathscr{X}_t), \Bir} \to \text{Hom}(D(\Lambda_0),D(\Lambda_0)),
\]
since $\Lambda_0 \cong N^1(\mathscr{X}_t)$ for all very general $t$.
By construction, $\ker(r) = G$.

It follows that $G$ has finite index in
$\Gamma_{T(\mathscr{X}_t), \Bir}$, and hence $G$ is finitely generated.
The second statement then follows from the fact that
$\Gamma_{T(\mathscr{X}_t), \Bir}$ has finite index in $\Mon^2_{\Bir}(\mathscr{X}_t)$
(see \cite[Lemma~2]{BS12}),
and that $G$ has finite index in
$\Gamma_{T(\mathscr{X}_t), \Bir}$.
\end{proof}

We are now ready to prove Theorem \autoref{thm:main1}.

\begin{proof:mainthm1}
By Lemma \autoref{lem:groupG_t} the group $G:= G_t$ is well-defined when $t\in \mathcal{G}$. Let $t_0$ be any point of $\mathcal{G}$. We set
\[
\mathcal{F}:=\{t\in \mathcal{S}\mid G_{t_0}\not \subset G_t\}.
\]
We claim that $\mathcal{F}$ is a finite set. By Lemma \autoref{lem:groupG_t}, the group $G_{t_0}$ is a finitely generated subgroup of $\text{O}(\Lambda_0)$, so we can choose a finite set of generators $\{g_i\}_{i=1}^k$ for $G_{t_0}$. To prove the claim it suffices then to show that the sets
\[
\mathcal{F}_i:=\{t\in \mathcal{S}\mid g_i \not \in G_t\}
\]
are finite. Choose $g=g_j\in \{g_i\}_{i=1}^k$, and let $t$ be an element of $\mathcal{F}_j$. 
Then $g(\ell)\notin M_t$, by definition of $\mathcal{F}_j$. 
Note that $g(\ell)\in \Lambda_0$, since $g\in \Mon_{\Lambda_0}$. 
Moreover, $q(g(\ell))>0$ and $q(g(\ell),\ell)>0$, because $\ell,g(\ell)\in M_{t_0}$. 
In particular, $g(\ell)$ lies in $\mathrm{Pos}(\mathscr{X}_t)$, and hence $g(\ell)$ is represented 
by an effective $\Q$-divisor $D_t$ (in fact, we may take $D_t$ to be integral, 
by \cite[Corollary~5.3]{Jiang23}).

Since $g(\ell)\notin M_t$, Lemma~\autoref{lem:charMov} implies that there exists a 
prime exceptional divisor $E_t$ on $\mathscr{X}_t$ such that
\[
    q\big(g(\ell),[E_t]\big)\le 0.
\]
The extension $\widetilde{g}$ of $g$ to $\Lambda$, characterized by 
$\widetilde{g}_{|(\Lambda_0)^{\perp}}=\id$, is a Hodge isometry.  
Hence either $\widetilde{g}^{-1}([E_t])=E'_t$ or 
$-\widetilde{g}^{-1}([E_t])=-E'_t$ is effective.

Since $\ell$ is ample, we in fact have
\[
    q\big(\ell,[E'_t]\big)=q\big(g(\ell),[E_t]\big)\neq 0,
\]
and therefore $E_t$ must appear in the support of $D_t$.  
In particular, for every $t\in \mathcal{F}_i$ we find a prime exceptional divisor $E_t$ such that
\[
    0 < q\big(\ell,[E_t]\big)\leq q\big(\ell,g(\ell)\big),
\]
because $E_t$ is contained in the support of $D_t$.  
Note that the number $q(\ell,g(\ell))$ is independent of~$t$.

Consider now the relative Hilbert scheme $\mathrm{Hilb}^\ell_{\mathscr{X}/\Delta}$ for the relative polarization $\ell$, 
which exists for projective analytic families (see, for example, 
\cite[Chapter~IX, Section~7]{ACGH11}).  
If there are infinitely many points $t\in \mathcal{F}_i$, then we obtain 
infinitely (in fact, countably) many prime exceptional divisors $\{E_t\}_t$ on 
fibers $\mathscr{X}_t$ satisfying
\[
    0 < q\big(\ell,[E_t]\big)\leq q\big(\ell,g(\ell)\big).
\]

The Hilbert polynomial 
\[
    P_{E_t}(m)=\chi\big(\mathscr{O}_{E_t}(m\mathcal{L}_t)\big)
\]
is determined by the quantities 
$q(\mathcal{L}_t,E_t)$, $q(E_t)$, and $q(\mathcal{L}_t)$.  
Since $q(E_t)$ is bounded from below (\cite[Proposition 4.8]{KMPP19}) and $q(\mathcal{L}_t,E_t)$ is bounded from above, 
only finitely many distinct Hilbert polynomials can occur.  
Hence, one component of the relative Hilbert scheme must contain infinitely many of the 
divisors $E_t$; denote this component by $\mathscr{H}$.  

The morphism $\mathscr{H}\to \Delta$ is dominant and, being proper, has to be surjective, by Remmert's proper mapping Theorem.  
Consequently, on the fiber $\mathscr{X}_{t_0}$ there exists an effective 
divisor $E_{t_0}$ such that
\[
    q\big(E_{t_0},g(\ell)\big)<0,
\]
which is impossible, because $g(\ell)$ lies in the interior of the movable cone of $\mathscr{X}_{t_0}$.
\end{proof:mainthm1}

\section{Birational automorphism group of the special fiber}

We follow the notation of the previous section and start this one with the following auxiliary lemma.

\begin{lem}\label{lem:schifo}
    Let $M$ be a lattice and consider a rank two sublattice $L = \Z\langle \ell, a\rangle$, where $\ell$ is a primitive vector in $M$ of positive square. Suppose that $M$ contains a copy of $U$ which is orthogonal to $L$. Then, for any positive integer $N$, there exists a vector $h\in M$ and infinitely many values of $n$ such that the lattice $\Z\langle \ell, na + h\rangle$ is primitive in $M$ and does not contain vectors of square $-i$ for all $0\le i\le N$. 
\end{lem}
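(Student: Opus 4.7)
The plan is to take $h = e - bf \in U$, where $\{e, f\}$ is a standard basis of the hyperbolic plane $U \subset L^{\perp}$ and $b$ is a positive integer to be fixed sufficiently large. Since $U$ is unimodular we have an orthogonal decomposition $M = U \oplus U^{\perp}$ of lattices, with $L \subset U^{\perp}$. Primitivity of $\Z\langle \ell, na + h\rangle$ then follows directly: for any relation $kw = c\ell + d(na+h)$ with $w \in M$ and $k \ge 1$, projection onto $U$ yields $ku_{w} = dh$, which together with the primitivity of $h = e - bf$ in $U$ forces $k \mid d$; projection onto $U^{\perp}$ then gives $kv_{w} = c\ell + dna$, and the primitivity of $\ell$ in $U^{\perp}$ (inherited from $M$) forces $k \mid c$.

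Setting $A = \ell^2$, $B = \ell \cdot a$, $C = a^2$, and $D = AC - B^2 \ne 0$ (by the non-degeneracy of $L$ as a sublattice), the Gram matrix of $\Z\langle \ell, na + h\rangle$ in the basis $\{\ell, na+h\}$ has determinant $Dn^2 - 2bA$. A nonzero vector $v = c\ell + d(na + h)$ has $v^{2} = -i$ with $0 \le i \le N$ if and only if $d \ne 0$ and
\[
(Ac + Bnd)^{2} \;=\; d^{2}(2bA - Dn^{2}) - Ai .
\]
When $D > 0$, taking $n^{2} > 2bA/D$ makes the right-hand side strictly negative for every $d \ne 0$, contradicting the non-negativity of the left-hand side; hence every such $n$ is ``good'' and the conclusion is immediate in this case.

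The subtle case is $D < 0$, where the right-hand side equals $d^{2}(2bA + |D|n^{2}) - Ai > 0$ and the form is always indefinite. The strategy is to fix $b$ large (say $b \gg NA$) and show that the set of ``bad'' $n$ has density zero. For each fixed triple $(c, d, i)$ with $d \ne 0$ and $0 \le i \le N$, the equation $Ac^{2} + 2Bncd + (Cn^{2} - 2b)d^{2} = -i$ is a polynomial equation in $n$ of degree at most two and thus admits at most two integer solutions. Writing $m = Ac + Bnd$, the condition becomes the Pell-type equation $m^{2} - |D|d^{2}n^{2} = A(2bd^{2} - i)$; when $|D|$ is not a perfect square, its integer solutions $(m, n)$ grow exponentially, giving only $O(\log X)$ values of $n \le X$ per $(d, i)$. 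Moreover, the completing-the-square identity constrains $|m|$ to lie within an interval of width $O(N/|d|)$ around $|d|\sqrt{|D|n^{2} + 2bA}$, so that solutions with large $|d|$ become increasingly rare. Summing these estimates over all $(c, d, i)$, and handling the subcase where $|D|$ is a perfect square by an explicit factorization of the form over $\Q$, shows that the set of bad $n$ has density zero, producing infinitely many ``good'' $n$. The main obstacle is controlling this summation over unbounded $|d|$; the large choice of $b$ is what makes the allowed Pell windows shrink fast enough so that the total density of bad $n$ stays strictly less than one.
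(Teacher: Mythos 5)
Your reduction of the problem to the representation question for the binary form
\[
q_n(c,d)\;=\;Ac^2+2Bncd+(Cn^2+h^2)d^2,
\qquad (Ac+Bnd)^2=d^2\bigl(-Dn^2-Ah^2\bigr)-Ai ,
\]
and your primitivity argument via the splitting $M=U\oplus U^{\perp}$ are both fine. The problem is that the only case you actually prove is $D=AC-B^2>0$, i.e.\ $L$ definite; but in the sole application of this lemma (case~(3) of the proof of Theorem~\ref{thm:main2}) the lattice $L$ is a N\'eron--Severi lattice of signature $(1,1)$, so $D<0$ always. For $D<0$ you offer only a density heuristic, and you yourself flag its weak point: the sum over unbounded $|d|$. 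That obstacle is not cosmetic. If $q_n$ represents some $-i$ at all, it does so with infinitely many $(c,d)$ (the orbit under the automorphism group of the indefinite form), so counting bad pairs $(n,d,i)$ wildly overcounts bad $n$, and the naive sum $\sum_{d}N/(2|d|\sqrt{|D|n^2+2bA})$ over all $d$ diverges; conversely, restricting to a fundamental domain for the automorphisms requires controlling how the class of $q_n$ varies with $n$, which is precisely the content of the lemma. Making $b$ large does not by itself create any congruence or size obstruction to representing the numbers $-i$, $0\le i\le N$, so the claim that the bad set has density zero is unsubstantiated; and even granting it, the perfect-square-$|D|$ subcase is only waved at. As written, the proposal does not prove the lemma in the case that matters.

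The paper's proof takes a different and purely arithmetic route that you may find instructive to compare with. Instead of a generic $h=e-bf$, it chooses $h\in U$ primitive with prescribed square $h^2=Am$, where $m$ is a non-square divisible by $m_1=4\prod_{0<Ad\le N}Q(d)$ and $Q(d)=\prod_{p\mid d}p^{2\lceil v_p(d)/2\rceil}$, and it replaces $na+h$ by $(mA^2n)a+h$. Passing to the vector $b=Aa-(\ell\cdot a)\ell$, which is orthogonal to both $\ell$ and $h$, the sublattice becomes \emph{diagonal}, with Gram matrix $\mathrm{diag}\bigl(A,\;Am(mAn^2(b^2)+1)\bigr)$; the cross term that forces you into indefinite binary form theory simply disappears. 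The equation $v^2=-i$ then reduces to $x^2+m(mAn^2(b^2)+1)y^2=-d$ with $i=Ad$, which is killed for every $n$ at once by elementary divisibility ($Q(d)\mid x^2$ forces $d$ to be a square $q^2$) followed by a contradiction modulo $4$ (since $4\mid m/q^2$ and $-1$ is not a square mod $4$). In short: the paper engineers a congruence obstruction valid uniformly in $n$, whereas your construction leaves an analytic equidistribution problem that the sketch does not resolve. (Two smaller points: the hypothesis does not guarantee $D\neq 0$, so your appeal to ``non-degeneracy of $L$'' needs justification or a separate treatment of $D=0$; and note that the paper's ``infinitely many $n$'' are the multiples of $mA^2$, which is all the statement requires.)
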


\begin{proof}
    Let  $A = \ell^2$ be the square of $\ell$. For any integer $d$ we define
    $$
        Q(d) \coloneqq \prod_{p\mid d} p^{2\lceil \frac{v_p(d)}2\rceil},
    $$
    where the product is taken over the prime factors of $d$, and $v_p(d)$ is the $p$-adic valuation of $d$. We have $d\mid Q(d)$, and $Q(d)\mid d$ if and only if $d$ is a square. Moreover, for any integer $x$ such that $d\mid x^2$, we have $Q(d)\mid x^2$. Finally, we set 
    $$
    m_1 \coloneqq 4\prod_{0<Ad\le N} Q(d).
    $$

    We go back to the proof, and choose a primitive vector $h\in U$ of square $h^2=Am$, for some non-square integer $m$ which is divisible by $m_1$. Then $h$ is orthogonal to $\ell$ and $a$, and the lattice 
    $$
        L_{n, h} \coloneqq \Z\langle \ell , (mA^2n)a + h\rangle,
    $$
    is primitive in $M$. Indeed, since $U$ is unimodular, there exists $k\in U$ such that $k\cdot h=1$. Then, any vector $v\in  M$ such that $rv$ lies in $L_{n, h}$ for some nonzero integer $r$, can written as $\frac{x}{r}\ell + \frac{y}{r}((mA^2n)a + h)$. Therefore $\frac{y}{r} = v\cdot k$ is an integer, and hence the vector $\frac{x}{r}\ell$ lies in $M$. Since $\ell$ is primitive, $\frac{x}{r}$ is also an integer and $v$ lies in $L_{n,h}$.
    

   We denote by $b$ the vector $Aa - (\ell\cdot a)\ell$. Then the vector $b$ is orthogonal to $\ell$ and to $h$, and we have the equality of lattices
   $$
   L_{n,h} =\Z\langle \ell , (mA^2n)a + h\rangle = \Z\langle \ell, (mAn)b + h\rangle.
   $$
   In particular, any vector $v\in L_{n,h}$ can be written as $x\ell + y((mAn)b + h)$ and has square 
   \begin{equation*}
       v^2 = Ax^2 + Am(mAn^2\cdot (b^2) + 1)y^2.
   \end{equation*}
   We show that the equation $v^2=-i$ has no solution for $0\le i \le N$. Since $A\mid v^2$, for any $v\in L_h$, we just consider integers $i\le N$ of the form $i = Ad$ 
   , and show that the equation 
    \begin{equation}\label{eqn:square}
       x^2 + m(mAn^2\cdot (b^2) + 1)y^2 = -d
   \end{equation}  
   has no solution.

   For $d=0$, just notice that $|m(mAn^2\cdot (b^2) + 1)|$ is not a square, since $m$ is not a square by hypothesis and is coprime with $(mAn^2\cdot (b^2) + 1)$, therefore $|m(mAn^2\cdot (b^2) + 1)|y^2 \neq x^2$ for all integers $x,y$.\\

   Otherwise, $d$ must divide $x^2$, hence $Q(d)$ divides $x^2$ and, by definition of $m$, $Q(d)$ divides the left hand side of Equation~\eqref{eqn:square}, thus $d$ is equal to a square $q^2$. But then $q$ divides $x$ and $m$ is divisible by $q^2$ by definition. Hence we can divide Equation~\eqref{eqn:square} by $q^2$, and obtain
   $$
    x_1^2 + \frac{m}{q^2}(mAn^2\cdot (b^2) + 1)y^2 = -1,
   $$
   which has no solutions, since $4\mid \frac{m}{q^2}$ by definition, and $-1$ is not a square modulo $4$.
\end{proof}

We are now ready to prove Theorem \ref{thm:main2}.

\begin{proof:mainthm2}
    We show that, for any point $t_0\in \cG$ and any open neighborhood $U\subset \Delta$ of $t_0$, there exists a point $t\in \cS\cap U$ such that $|\Bir(\cX_t)|=\infty$.  Up to shrinking $\Delta$, we may assume that $\mathcal{F}=\emptyset$.
    
    Set $r_0 \coloneqq \rank(\Lambda_0)$ and recall that for any $t\in \cS$, we have $\rank(\Lambda_t)>r_0$. Therefore, up to shrinking $\Delta$, we can suppose $U=\Delta$ and that one of the two following cases apply:
    \begin{enumerate}[label = (\arabic*)]
        \item\label{rank3and2} there exists a sequence $\{t_n\}_{n\ge 1}\subset \cS$ converging to $p$ such that $\rank(\Lambda_{t_n})\ge 3$ for all $n$, and $\rank(\Lambda_0)\geq 2$;
        \item\label{rank3and1} there exists a sequence $\{t_n\}_{n\ge 1}\subset \cS$ converging to $p$ such that $\rank(\Lambda_{t_n})\ge 3$ for all $n$, and $\rank(\Lambda_0)=1$;
        \item\label{rank2} $r_0=1$ and any $t\in \cS$ satisfy $\rank \Lambda_t =2$.
    \end{enumerate}
    Then the theorem is reduced to showing that in the three cases there exists \emph{one} point $t\in \cS$ such that $|\Bir(\cX_t)|=\infty$.
    
    We first consider case~\ref{rank3and2}. We argue by contradiction and assume that $\mathrm{Bir}(\mathscr{X}_{t_n})$ is a finite group, for every $t \in \Delta$. Since $\mathrm{rk}(\Lambda_{t_n})\geq 3$ and $|\mathrm{Bir}(\mathscr{X}_{t_n})|<\infty$, it follows from the birational version of the Kawamata-Morrison cone conjecture (\cite[Theorem 6.25]{Markman11}) that $\mathscr{X}_{t_n}$ carries finitely many prime exceptional divisors only, say,
    \[
    E_{n,1},\dots , E_{n,k(n)},
    \]
    where $k(n)$ depends on the fiber $\mathscr{X}_{t_n}$. Moreover, by \cite[Theorem 1.2]{Den22}, we have that 
    \[
    \mathrm{Eff}(\mathscr{X}_{t_n})=\sum_{j=1}^{k(n)}\R_{\geq 0}[E_{n,j}].
    \]
    We now recall that $|\mathrm{Bir}(\mathscr{X}_{t_n})|<\infty$ and $\rho(\mathscr{X}_{t_n})\geq 3$, for every $n$. Thus, by Proposition \ref{prop:finmanylattices}, there are only finitely many lattices representing the Picard groups $\mathrm{Pic}(\mathscr{X}_{t_n})$. We may assume that $\rho(\mathscr{X}_{t_n})=:\rho$. By Lemma \ref{lem:finmanymovcones}, we may assume that any fiber $\mathscr{X}_{t_n}$ has a fixed number of prime exceptional divisors $k$, which is independent of $n$. Moreover, thanks to Lemma \ref{lem:finmanymovcones}, we may also assume that the intersection matrices $\left(q(E_{n,i},E_{n,j})\right)_{i,j}$ are constant.

    We claim that if the set $\{q_{\mathscr{X}_{t_n}}(\ell,E_{n,i})\}_n$ is bounded for every $i$, then we reach a contradiction. Indeed, this implies, as in the proof of Theorem~\ref{thm:main1}, that one component $\mathscr{H}$ of the relative Hilbert scheme $\mathrm{Hilb}^{\ell}_{\mathscr{X}/{\Delta}}$ for the polarization $l$ dominates $\Delta$. By the properness of the morphism $\mathscr{H}\to \Delta$, it follows that the divisors $E_{n,0},\dots E_{n,k}$ can be degenerated to effective divisors $E'_0,\dots,E'_k$ on the fiber $\mathscr{X}_{t_0}$, such that their Gram-matrix for the BBF form is the same as that of $E_{n,0},\dots E_{n,k}$. But then $\mathscr{X}_{t_0}$ has at least $\rho$ linearly independent effective divisor classes, hence $\rho(\mathscr{X}_{t_0})\geq \rho(\mathscr{X}_{t_n})>r_0$, a contradiction.
We are then reduced to showing that the set $\{q_{\mathscr{X}_{t_n}}(\ell,E_{n,i})\}_n$ is bounded for every $i$. We distinguish three subcases.
  
    \underline{Suppose that $\mathrm{rk}(\Lambda_{t_0})\ge 3$.} In this case, $\mathscr{X}_{t_0}$ has finitely many prime exceptional divisors generating the effective cone, say
    \[E_{0,1},\dots, E_{0,k(0)}.\]
    Now, being a prime exceptional divisor is an open property in families, by \cite[Proposition 5.2]{Markman13}, therefore, as the classes $[E_i]$ stay algebraic in the whole family, they are represented by prime exceptional divisors in a small neighborhood of $t_0$. In particular, it follows that they are represented by prime exceptional divisors on the fibers $\mathscr{X}_{t_n}$ for every sufficiently large $n$. For the fiber $\mathscr{X}_{t_n}$, we denote them by $\{E^n_{0,i},\dots,E^n_{0,k(0)}\}$. In particular
    \[
    \{E^n_{0,i},\dots,E^n_{0,k(0)}\}\subset \{E_{n,1},\dots, E_{n,k}\},
    \]
    and we can write $l=\sum_{j=0}^{k(0)}b_jE^n_{0,j}$ for all $n\gg 0$. Since the matrix $q_{\cX_{t_n}}(E_{n,i}, E_{n, j})$ is bounded, the set $\{q_{\mathscr{X}_{t_n}}(E_{n,i},l)\}_{n\geq 1}$ is bounded.
    
    \underline{Suppose that $\mathrm{rk}(\Lambda_{t_0})=2$, and $\mathrm{rk}(\Lambda_{t_n})\geq 3$, for any $n$.} By \cite[Corollary 1.6]{Den22}, \cite[item (2) of Theorem 1.3]{Oguiso14}, the two rays generating $\mathrm{Eff}(\mathscr{X}_{p})$ are rational. There are three possibilities: (1) the rays are spanned by isotropic classes of effective divisors $H_1$ and $H_2$, or (2) by an isotropic class of an effective divisor $H$ and a class of a prime exceptional divisor $E_{0,1}$, or (3) by classes of prime exceptional divisors $E_{0,1}$ and $E_{0,2}$. In case (3) the proof goes exactly as in the $\mathrm{rk}(\Lambda_{t_0})=3$ case, so it remains to consider (1) and (2).
    First, assume that $\mathrm{Eff}(\mathscr{X}_{p})$ is spanned by $[H_1]$ and $[H_2]$. By \cite[Theorem 1.2]{Matsu17}, the divisors $[H_1]$ and $[H_2]$ stay effective and movable on the nearby fibers of the family, and hence on the fibers $\mathscr{X}_{t_n}$ for $n\gg 0$. Moreover, we may assume that $[H_1]$ and $[H_2]$ are primitive classes on $\mathscr{X}_{t_0}$ and on the nearby fibers. Then we have $[H_j]=\sum_{i=1}^{k}a_{j,i}(t_n)[E_{n,i}]$ for $n\gg 0$, where, a priori, the coefficients $a_{j,i}(t_n)\geq0$ depend on $n$. We now recall that the Néron--Severi lattice of a projective hyper-Kähler manifold with finite birational automorphism group (and hence with rational polyhedral effective cone) possesses only finitely many primitive isotropic classes represented by movable divisors. But since the Néron--Severi lattice of the fibers $\mathscr{X}_{t_n}$ is fixed, as well as the movable cone and the intersection matrix of the prime exceptional divisors, then there are only finitely many possibilities for $a_{j,i}(n)$, for each $n$. Hence we can assume (up to passing to a subsequence) that the coefficients $a_{j,i}$ in the linear combinations $[H_j]=\sum_{i=1}^{k}a_{j,i}(t_n)[E_{n,i}]$ do not depend on $n$. Since $\ell$ is a linear combination of $[H_1]$ and $[H_2]$, we deduce that $\{q_{\mathscr{X}_{t_n}}(\ell,E_{n,i})\}_n$ is bounded for every $i$. The case where the effective cone of $\mathscr{X}_{t_0}$ is spanned by a prime exceptional divisor and by an isotropic integral class is treated in the same way. This concludes the proof of this subcase and hence of case~\ref{rank3and2}.

   We now focus on case~\ref{rank3and1}.

    \underline{Suppose that $\mathrm{rk}(\Lambda_0)=1$ and $\mathrm{rk}(\Lambda_{t_n})\ge 3$ for all $n$.} Choose a Euclidean norm $\lVert \cdot \rVert$ on $\Lambda \otimes \mathbf{R}$. We first observe that if $\{\lVert E_{n,i}\rVert\}_{n\ge 1}$ is bounded for some $i$, then the set $\{E_{n,i}\}_{n\ge 1}$ is finite, because there are only finitely many lattice points with bounded norm. It follows that the set $\{q_{\mathscr{X}_{t_n}}(E_{n,i},\ell)\}_{n\ge 1}$ is bounded, and then the sequence $\{E_{n,i}\}_n$
 gives an effective divisor of negative BBF square on the fiber $\mathscr{X}_{t_0}$, which has Picard rank $1$, and hence a contradiction.
    
    We may then assume that $\{\lVert E_{n,i}\rVert\}_{n\ge 1}$ is unbounded for all $i$. By passing to a subsequence, we may assume that $\lim_{n \to \infty} \lVert E_{n,i}\rVert = \infty$. Let $x_{n,i}$ be the image of $[E_{n,i}]/\lVert E_{n,i}\rVert$ under the marking 
    \[
    \tau \colon R^2 \varphi_* \mathbf{Z}_\mathscr{X} \to \Lambda \times \Delta.
    \]
    Since $\lVert x_{n,i}\rVert = 1$, we can pass to a convergent subsequence in $\Lambda \otimes \mathbf{R}$ and define
    \[
    x_{0,i} := \lim_{n\to\infty} x_{n,i} \in \Lambda \otimes \mathbf{R}.
    \]
    Since the set $\{q_{\mathscr{X}_{t_n}}(E_{n,i}, E_{n,j})\}_n \subset \mathbf{Z}$ is bounded, we have $q_{\mathscr{X}_{t_0}}(x_{0,i}, x_{0,j}) = 0$ for all $i,j$. Let $\omega$ be a nowhere vanishing section of $\varphi_* \Omega^2_{\mathscr{X}/\Delta}$. With some abuse of notation, we also denote by $x_{0,i}$ the corresponding class on the fiber $\mathscr{X}_{t_0}$ under $\tau$. We observe that
    \[
    q_{\mathscr{X}_{t_0}}(x_{0,i}, \omega_{\mathscr{X}_{t_0}}) = \lim_{n \to \infty} q_{\mathscr{X}_{t_n}}(x_{n,i}, \omega_{\mathscr{X}_{t_n}}) = 0,
    \]
    hence the $x_{0,i}$ are represented by real $(1,1)$-forms on $\mathscr{X}_{t_0}$. Since the signature of $q_{\mathscr{X}_{t_0}}$ restricted to $H^{1,1}(\mathscr{X}_{t_0}, \mathbf{R})$ is $(1, b_2(\mathscr{X}_{t_0})-3)$, the dimension of a totally isotropic subspace in $H^{1,1}(\mathscr{X}_{t_0}, \mathbf{R})$ is at most one. Therefore, there exists $e \in H^{1,1}(\mathscr{X}_{t_0}, \mathbf{R})$ with $q_{\mathscr{X}_{t_0}}(e,e) = 0$ such that $x_{0,i} \in \mathbf{R} e$ for all $i$. Consequently, the rational polyhedral cones $\mathrm{Eff}(\mathscr{X}_{t_n})$ accumulate toward the ray $\mathbf{R} e$ as $n \to \infty$. Since $\ell \in \mathrm{Eff}(\mathscr{X}_{t_0})$, we obtain $\ell \in \mathbf{R} e$. However, $q_{\mathscr{X}_{t_0}}(\ell,\ell) > 0$, which is a contradiction.

    
    To conclude, we consider case~\ref{rank2}. 
    
    \underline{Suppose that $\mathrm{rk}(\Lambda_0)=1$ and $\mathrm{rk}(\Lambda_{t_n})= 2$ for all $n$.} Let $$\pi \colon\Delta \to \cP = \{x\in \P(\Lambda\otimes \C) \mid
 x^2 = 0, \quad x\cdot \bar{x} >0\},$$
    be the period map associated with the family $\cX\to \Delta$, which sends each point $t\in \Delta$ to $[\tau_t(\sigma_t)]$, where $\sigma_t$ is the symplectic form on $\cX_t$. For any $v\in \Lambda$, we denote by $H_v$ the divisor cut out by $\P(v^\perp \otimes \C)\subset \P(\Lambda\otimes \C)$. The point $\pi(t)$ lies in $H_v$ if and only if the vector $\tau_t^{-1}(v)$ lies in $N^1(\cX_t)$.
    
    By assumption, the lattice $\Lambda_{t_0}$ has rank $1$, hence it is generated by the primitive polarization $\ell$ and the image $\pi(\Delta)$ is contained in $H_{\ell}$. Moreover, since for any $t\in \cS$, the lattice $\Lambda_t$ has rank $2$, a point $t\in \Delta$ lies in $\cS$ if and only if there exists $a\in \Lambda\setminus \Z_{\ell}$ such that $\pi(t)\in H_a$. 

    Let $t$ be a point in $\cS$. If the group $\Bir(\cX_t)$ is infinite, then we are done. Otherwise, let $\ell$ and $a$ be generators of the lattice $\Lambda_t$. 

    We now recall that, for any \hk \ manifold $X$ of Picard rank $2$, the group $\mathrm{Bir}(X)$ is finite if and only if $N^1(X)$ does not contain neither isotropic classes, nor classes representing prime exceptional divisors (see, for example, \cite[Corollary 1.6]{Den22}). Moreover, for any fixed deformation type, the square of prime exceptional divisors is bounded from below (\cite[Proposition 4.8]{KMPP19}).
    
    Lemma~\ref{lem:schifo} shows that there exists a vector $h\in \Lambda$ and infinitely many values of $n$ such that the lattice $L_{n,h} =\Z\langle \ell , na + h\rangle$ is primitive in $\Lambda$ and does not contain any vector of square equal to $-i$ for all $0\le i\le N$.

    Since the $H_a\cap \pi(\Delta)$ is nonempty, there exists $n$ large enough such that $\pi(\Delta)\cap H_{a + \frac{h}n}$ is nonempty, and in particular a point $\bar t\in \cS$ with $N_{\bar t}\subset L_{n,h}$. By the discussion above, the \hk \ manifold $\cX_t$ satisfies $|\Bir(\cX_t)|=\infty$.
\end{proof:mainthm2}

\begin{rem}
Let $\mathscr{X}\to \Delta$ be a nontrivial projective family of {\hk} manifolds belonging to one of the known deformation classes. Suppose that $\mathrm{rk}(\Lambda_0)=1$ and $\mathrm{rk}(\Lambda_t)=2$ for infinitely many $t \in \mathcal{S}$. It is worth noting that the proof of the last case above shows that the set of points in $\Delta$ where the automorphism group is infinite is dense in $\Delta$. Indeed, a projective {\hk} manifold $X$ with $b_2(X)\ge 5$ and Picard rank equal to $2$ has infinite $\mathrm{Aut}(X)$ if its Picard lattice does not represent nef integral isotropic classes, nor certain classes $\alpha$ of negative square such that $|q_X(\alpha)|\le N$, for some fixed positive integer $N$ depending only on the deformation class of the {\hk} manifolds.

This can be seen as follows: by \cite[Theorem 1.3]{Oguiso14}, the nef cone of a {\hk} manifold as above is rational polyhedral if and only if $\mathrm{Aut}(X)$ is finite. In this case, the extremal rays are spanned either by isotropic integral classes or by primitive divisor classes orthogonal to the so-called \enquote{wall-divisor} classes, equivalently known as \enquote{MBM classes}. We refer the reader to \cite{Mongardi15} and the survey \cite{AV21} for more information on wall-divisors, MBM classes, and their equivalence. The key point here is that the BBF square of primitive MBM classes is bounded, with the bound depending only on the deformation class, thanks to \cite[Theorem 1.5, Corollary 1.6]{AV17} and \cite[Theorem 3.17]{AV20}. Then, once we know that the Picard lattice represents neither negative classes $\alpha$ with $|q_X(\alpha)|\le N$ (where $-N$ is the bound on the BBF square of MBM classes) nor isotropic classes, we can conclude that the automorphism group is infinite.
\end{rem}

We conclude the paper by showing that, for a family over a disk, the members with infinite birational automorphism group form a dense subset of the base even when the family is not projective.

\begin{proof:corollary}
    We argue by contradiction. Suppose that the set of points $\mathcal{D}$ where the birational automorphism group is infinite is not dense. Consider then $\Delta^{\text{o}}=\Delta \setminus \overline{\mathcal{D}}$ and the associated family $\varphi_{|\mathscr{X}_{\Delta^{\text{o}}}}\colon \mathscr{X}_{\Delta^{\text{o}}}\to \Delta^{\text{o}}$. We may assume that $\Delta^{\text{o}}$ is connected. Since $\mathcal{G}$ is dense in $\Delta$, there exists a point $p\in\mathcal{G}\cap \Delta^{\text{o}}$. Moreover, the restriction $\varphi_{|\mathscr{X}_{\Delta^{\text{o}}}}$ is proper, and $\Lambda_0 \subset \Lambda_t$, for any $t$. Thus $\varphi_{|\mathscr{X}_{\Delta^{\text{o}}}}$ is a projective morphism around $p$. Let $U=U(p)$ be a small open neighborhood where $\varphi_{|\mathscr{X}_{\Delta^{\text{o}}}}$ is projective.  By the density of $\mathcal{S}\cap \Delta^{\text{o}}$ in $\Delta^{\text{o}}$, the restriction of $\varphi_{|\mathscr{X}_{\Delta^{\text{o}}}}$ to $U$ stays nontrivial (in particular, there are fibers where the Picard rank jumps). But then by Theorem \ref{thm:main2} there are points in $U$ where the birational automorphism group is infinite, which is a contradiction. Then $\mathcal{D}$ is dense in $\Delta$, and we are done.
\end{proof:corollary}

\bibliography{biblio}
\bibliographystyle{alpha}

\end{document}